\DeclareMathOperator{\Princ}{Princ}
\newcommand{\Jp}[1]{\tup{J}^+(#1)}
\newcommand{\Pd}{P^{\,\tup{d}}}
\newtheorem{theorem}{Theorem}
\newtheorem{lemma}[theorem]{Lemma}
\newtheorem{corollary}[theorem]{Corollary}
\theoremstyle{definition} 
\newtheorem*{problem}{Problem}
\begin{document}
\title[On the set of principal congruences of a finite lattice]{Some preliminary results on\\ the set of principal congruences of a finite lattice}  
\author{G. Gr\"{a}tzer} 
\email[G. Gr\"atzer]{gratzer@me.com}
\urladdr[G. Gr\"atzer]{http://server.maths.umanitoba.ca/homepages/gratzer/}

\author[H. Lakser]{H. Lakser}
\email[H. Lakser]{hlakser@gmail.com} 
\address{Department of Mathematics\\University of Manitoba\\Winnipeg, MB R3T 2N2\\Canada}

\date{\today}
\dedicatory{To the memory of our friend of more than 50 years,
Bjarni J\'onsson}
\subjclass[2010]{Primary: 06B10.}
\keywords{congruence lattice, principal congruence,
join-irreducible congruence, 
principal congruence representable set.}

\begin{abstract}
In the second edition of the congruence lattice book, 
Problem 22.1 asks for a characterization of subsets
$Q$ of a finite distributive lattice $D$ such that
there is a finite lattice $L$ whose congruence lattice
is isomorphic to $D$ and under this isomorphism
$Q$ corresponds the the principal congruences of $L$.

In this note, we prove some preliminary results.
\end{abstract}

\maketitle

\section{Introduction}\label{S:Introduction}

For a finite lattice $K$, 
let $\J K$ and $\Princ K$ denote the (ordered) set of 
join-irreducible elements of $K$ and the 
principal congruences of $K$, respectively, 
and let 
\[
   \Jp K = \set{0,1} \uu \J K.
\]
Then for a finite lattice $L$,
\begin{equation}\label{E:cont}
   \Jp {\Con L} \ci \Princ L \ci \Con L,
\end{equation}
since every join-irreducible congruence is generated by a prime interval; furthermore, $\zero = \con{x,x}$ for any $x \in L$ and $\one = \con{0,1}$.

This paper continues G.~Gr\"atzer \cite{gG14}
(see also Section 10-6 of \cite{LTS1} and Part VI of~\cite{CFL2}),
whose main result is the following statement. 

\begin{theorem}\label{T:bounded}
Let $P$ be a bounded ordered set.
Then there is a bounded lattice~$K$ such that $P \iso \Princ K$.
If the ordered set $P$ is finite, 
then the lattice $K$ can be chosen to be finite.
\end{theorem}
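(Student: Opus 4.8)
The plan is to realize $P$ as the ordered set of ``colors'' of a bounded lattice $K$ assembled from small gadgets. Write $P^-=P\setminus\{0,1\}$ for the set of proper elements. Since $\zero=\con{x,x}$ and $\one=\con{0,1}$ in any bounded lattice, it suffices to build a bounded lattice $K$ that carries, for each $p\in P^-$, a prime interval (an \emph{edge}) $\mathfrak e_p$, so that (a)~the principal congruences of $K$ are exactly $\zero$, $\one$, and the $\con{\mathfrak e_p}$ with $p\in P^-$, and (b)~$\con{\mathfrak e_p}\le\con{\mathfrak e_q}$ in $\Con K$ holds precisely when $p\le q$ in $P$; then $p\mapsto\con{\mathfrak e_p}$ extends to an order isomorphism $P\iso\Princ K$, and the construction below is finite whenever $P$ is.

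I would work with \emph{colored lattices}, that is, lattices whose prime intervals are labelled so that prime-perspective prime intervals get the same label. Start with a ``base'' lattice $B$ that carries one edge $\mathfrak e_p$ for each $p\in P^-$, laid out so that the $\con{\mathfrak e_p}$ are pairwise incomparable and --- this is the essential design constraint --- so that no interval of $B$ ``straddles'' two distinct colors; a suitable glued sum~$\gsum$ of two-edge blocks, or a frame lattice in the spirit of \cite{gG14}, serves here. Next, for each covering pair $p\cov q$ of $P$, attach to $B$ a copy of the asymmetric gadget lattice $\seight$, linking its two distinguished edges to $\mathfrak e_p$ and $\mathfrak e_q$ in the orientation dictated by $p<q$; by the defining property of $\seight$ this forces $\con{\mathfrak e_p}\le\con{\mathfrak e_q}$ while creating neither the reverse inequality nor any merging of $\con{\mathfrak e_p}$, $\con{\mathfrak e_q}$ with each other or with a third color, and the fresh bottom and top of each attached $\seight$ are absorbed, so $K$ stays bounded. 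Transitivity of $\le$ in $P$ then gives $\con{\mathfrak e_p}\le\con{\mathfrak e_q}$ for all $p\le q$ at no further cost. For infinite $P$, run the same construction over all of $P$ at once, or present $K$ as the directed union of the finite lattices obtained by restricting to the finite sub-ordered sets of $P$ that contain $0,1$ and are closed under the chosen covers, using that a directed union of bounded lattices with common $\zero$ and $\one$ is again bounded.

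The substantive step is the verification of (a) and (b), for which the standard instrument is the calculus of congruence-spreading through prime-perspectivity and prime-projectivity. One shows that in $K$ every nontrivial prime interval generates $\one$ or one of the $\con{\mathfrak e_p}$, whence every nonzero principal congruence $\con{x,y}$ equals $\one$ or a join $\bigvee\{\,\con{\mathfrak e_p}:p\in S\,\}$ with $S\subseteq P^-$; this already confines $\Princ K$ inside $\Con K$, and the order relations of (b) are read off from the placement of the gadgets. The delicate point is the ``no unwanted principal congruence'' half of (a): a priori a join $\con{\mathfrak e_q}\vee\con{\mathfrak e_{q'}}$ of two incomparable colors could arise as $\con{x,y}$ for a pair $x<y$ lying across two of the attached gadgets, and ruling this out requires $B$, and the way the copies of $\seight$ are amalgamated, to be ``thin'' enough that every interval of $K$ whose collapse involves two colors decomposes into subintervals each involving only one. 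Determining exactly which elements of $\Con K$ are principal --- equivalently, establishing this thinness of the assembled lattice --- is where I expect the real work to lie; granted it, the isomorphism $P\iso\Princ K$ and the boundedness of $K$ are routine.
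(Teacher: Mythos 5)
Your overall strategy---a base lattice carrying one prime interval $\mathfrak e_p$ per $p\in P^-$, plus gadgets forcing $\con{\mathfrak e_p}\leq\con{\mathfrak e_q}$, with the target isomorphism $p\mapsto\con{\mathfrak e_p}$---is indeed the strategy of the construction in \cite{gG14} that the present paper reviews in the proof of Theorem~\ref{T:new2} (the frame $F$ with edges $[a_p,b_p]$ and the gadgets $S(p,q)$). But as a proof the proposal has a genuine gap, and you name it yourself: the verification that no unwanted principal congruence arises (for instance a join $\con{\mathfrak e_q}\jj\con{\mathfrak e_{q'}}$ of incomparable colors realized as some $\con{x,y}$ across two gadgets) is exactly the substance of the theorem, and you defer it (``where I expect the real work to lie''). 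Nothing in the proposal supplies the structural property that makes this work. It is also worth noting that the actual construction does not achieve your design constraint that ``no interval straddles two colors''; instead it arranges the opposite resolution: because every element $x$ outside $\set{o,i,a_0,a_1}$ forms an $\SM 3$ with $o,i,a_0,a_1$, and incomparable pairs inside each $S(p,q)$ are complementary, any principal congruence that would involve more than one color is forced to be $\one$ itself, so the only principal congruences are $\zero$, $\one$ and the $\con{a_p,b_p}$. Without some such mechanism your claim (a) is unsupported.

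A second, independent gap concerns the infinite case. You attach gadgets only along covering pairs $p\cov q$ and appeal to transitivity; this is fine for finite $P$, but for a general bounded ordered set the order is not the transitive closure of the covering relation---a dense $P$ has no covers at all---so your lattice would not encode the order of $P$. The construction in \cite{gG14} inserts a gadget $S(p,q)$ for \emph{every} comparable pair $p<q\in P^-$, which works uniformly for infinite $P$. Your fallback via a directed union of finite approximations is not worked out either: finite subsets ``closed under the chosen covers'' do not capture the order of $P$, and one would further need the embeddings to preserve and reflect principal congruences, which is not addressed. So the proposal is a reasonable plan in the spirit of the known proof, but both the key verification and the infinite case are missing.
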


The bibliography lists a number of papers related to this result. 
 
Let $D$ be a finite distributive lattice 
and let $ Q \ci D$.
We call $Q$ \emph{principal congruence representable}
(\emph{representable}, for short), if there is a finite lattice $L$
such that the following two conditions are satisfied:
\begin{enumeratei}
\item there is an isomorphism $\gy$ of $\Con L$ and $D$;
\item $\Princ L$ corresponds to $Q$ under $\gy$.
\end{enumeratei}

By \eqref{E:cont}, if $Q \ci D$ is representable, then 
\begin{equation}\label{E:repr}
   \Jp {D} \ci Q \ci D.
\end{equation}

We call $Q \ci D$ 
a \emph{candidate for principal representability}
(\emph{candidate} for short) if \eqref{E:repr} holds.

\begin{problem}\label{P:main}
Characterize representable sets for finite distributive lattices.
\end{problem}

\noindent This is Problem 22.1 in the book \cite{CFL2}.
In this paper, we prove some preliminary results. 
We hope that the techniques developed here 
will have further applications.

The first two results deal with the two extremal cases: 
$Q = D$ and $Q = \Jp {D}$.

\begin{theorem}\label{T:D}
Let $D$ be a finite distributive lattice.
Then $Q = D$ is representable.
\end{theorem}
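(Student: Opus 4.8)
The plan is to represent $D$ as the congruence lattice of a finite \emph{sectionally complemented} lattice, and then to observe that in such a lattice every congruence is automatically principal.

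First I would invoke the classical representation theorem in a sharpened form: every finite distributive lattice $D$ is isomorphic, via some isomorphism $\gy$, to $\Con L$ for a suitable finite \emph{sectionally complemented} lattice $L$ --- that is, $L$ has a least element $0$ and each interval $[0,a]$ is a complemented lattice. Granting this, it remains only to prove $\Princ L=\Con L$; then, under $\gy$, the set $\Princ L$ corresponds to all of $D$, which is exactly the assertion that $Q=D$ is representable.

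To prove $\Princ L=\Con L$, I would take an arbitrary $\theta\in\Con L$ and show that it is principal. The congruence class of $0$ under $\theta$ is an ideal of $L$: it is closed under joins, and if $x\equiv 0\pmod\theta$ and $y\le x$, then $y=x\wedge y\equiv 0\wedge y=0\pmod\theta$. Since $L$ is finite, this class equals $[0,u]$ for some $u\in L$, and $u\equiv 0\pmod\theta$, so $\con{0,u}\le\theta$. For the reverse inequality, I would take any pair $(a,b)\in\theta$ with $a\le b$ and let $c$ be a complement of $a$ in $[0,b]$, so that $c\le b$, $a\wedge c=0$, and $a\vee c=b$. Then $\con{a,b}=\con{0,c}$: collapsing $[0,c]$ forces $a=a\vee 0\equiv a\vee c=b$, and collapsing $[a,b]$ forces $0=a\wedge c\equiv b\wedge c=c$. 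Since $(a,b)\in\theta$, this gives $c\equiv 0\pmod\theta$, hence $c\le u$, and therefore $\con{a,b}=\con{0,c}\le\con{0,u}$. As $L$ is finite, $\theta$ is the join of the congruences $\con{a,b}$ over all pairs $(a,b)\in\theta$, so $\theta\le\con{0,u}$; thus $\theta=\con{0,u}$ is principal.

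I expect the hard part to be the first step, not the second: one needs a \emph{sectionally complemented} realization of $D$, rather than merely some finite lattice $L$ with $\Con L\iso D$, and for a self-contained argument this amounts to building such an $L$ from the ordered set $\J D$. Once sectional complementedness is in hand, the congruence computation above is routine; combined with the facts $\zero=\con{x,x}$ and $\one=\con{0,1}$, it shows that every congruence of $L$ --- including $\zero$ and $\one$ --- is principal, i.e.\ $\Princ L=\Con L$, which completes the proof.
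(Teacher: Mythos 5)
Your proposal is correct and follows essentially the same route as the paper: it cites the Gr\"atzer--Schmidt theorem that every finite distributive lattice is the congruence lattice of a finite sectionally complemented lattice (the paper's Theorem~\ref{T:old}), and then proves, just as in the paper's Lemma~\ref{L:folklore}, that every congruence of a finite sectionally complemented lattice is principal by showing $\theta=\con{0,u}$ where $[0,u]$ is the congruence class of $0$. Your identification of the representation theorem as the only nontrivial ingredient matches the paper, which simply quotes it from the literature.
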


\begin{theorem}\label{T:unit}
Let $D$ be a finite distributive lattice 
with a join-irreducible unit element.
Then $Q =  \Jp D$ is representable.
\end{theorem}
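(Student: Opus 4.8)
Write $P = \J D$. Since the unit of $D$ is join-irreducible, $P$ has a largest element, which I denote by $m$, and $D$ is isomorphic to the lattice of down-sets of $P$, written $\mathcal{O}(P)$. The plan is to construct a finite lattice $L$ with $\Con L \iso \mathcal{O}(P)$ in which the only principal congruences are $\zero$ and the join-irreducible ones. Since every join-irreducible congruence of a finite lattice is principal (it is $\con{u,v}$ for a suitable prime interval $[u,v]$, as used in~\eqref{E:cont}), this gives $\Princ L = \Jp{\Con L}$, which corresponds to $\Jp D$ under the isomorphism, i.e.\ the assertion of Theorem~\ref{T:unit}. One cannot simply apply Theorem~\ref{T:bounded} to the bounded ordered set $\Jp D$ (which does have $\zero$ as least and, by hypothesis, $\one$ as greatest element): that construction produces a lattice $K$ with $\Princ K \iso \Jp D$ but gives no control over $\Con K$, so instead one must track $\Con L$ and $\Princ L$ together through a refinement of it.

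First I would build $L$ by the colored-frame technique of \cite{gG14}: starting from a frame lattice $\Frame P$ for the ordered set $P$ and attaching $\seight$-gadgets to encode the covers of $P$, one obtains a finite lattice whose prime intervals carry colors from $P$ in such a way that a prime interval of color $p$ generates the join-irreducible congruence $\mathbf{c}_p$, and $\con{u,v} \le \con{u',v'}$ holds precisely when the color of $[u,v]$ is below the color of $[u',v']$ in $P$. The usual congruence computation for such lattices then yields $\Con L \iso \mathcal{O}(P)$, with $\mathbf{c}_p$ the join-irreducible congruence corresponding to $p$.

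The substance is the computation of $\Princ L$. For $a < b$ in $L$, the congruence $\con{a,b}$ is the join of the $\mathbf{c}_p$ over the colors $p$ occurring on the prime intervals inside $[a,b]$; under $\Con L \iso \mathcal{O}(P)$ this join is the down-set generated by the set $C(a,b)$ of those colors, and such a down-set is join-irreducible exactly when $C(a,b)$ has a largest element of $P$. If the colors in $C(a,b)$ are pairwise comparable they form a chain and a largest one exists; the only danger is an interval carrying two incomparable colors with no common upper bound among the colors present. This is exactly where the join-irreducibility of the unit is used: the top color $m$ is an upper bound of every color, so it suffices to arrange the construction so that any interval of $L$ containing two incomparable colors also contains an $m$-colored prime interval --- equivalently, so that $\con{a,b} = \one$ for every such interval. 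I would secure this by routing the $\seight$-gadgets that witness incomparabilities in $P$ through a single central $m$-colored prime interval, so that inside $L$ one cannot span two incomparable colors without crossing that $m$-step. Granting this, $C(a,b)$ always has a largest element, so $\con{a,b}$ is join-irreducible for every $a < b$, and conversely each $\mathbf{c}_p$ is principal (generated by a color-$p$ prime interval); hence $\Princ L = \{\zero\} \uu \{\,\mathbf{c}_p \mid p \in P\,\} = \Jp{\Con L}$.

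The main obstacle is this last step: verifying that a concrete frame-and-gadget construction really has the property that no interval spans two incomparable colors without passing through an $m$-step, and that the only principal congruences it creates are the $\mathbf{c}_p$ and $\one$. This needs a careful analysis of the congruences of the $\seight$-gadget and of intervals running through several glued gadgets. A secondary point to check is that inserting the $m$-routing does not perturb the identification $\Con L \iso \mathcal{O}(P)$ established in the first step.
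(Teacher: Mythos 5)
Your overall strategy is the one the paper uses: take the frame-and-gadget lattice of \cite{gG14} (the construction behind Theorem~\ref{T:bounded}) and argue that in it every nonzero principal congruence is join-irreducible, the join-irreducibility of the unit of $D$ supplying the top color $m$. But as written, your proposal has a genuine gap at exactly its load-bearing step. The property that any interval of $L$ spanning two incomparable colors must contain an $m$-colored prime interval is something you say you \emph{would} secure ``by routing the gadgets through a single central $m$-colored prime interval,'' without giving that construction or its congruence analysis; you then yourself list as the ``main obstacle'' the verification that the resulting lattice has this property, that its only principal congruences are $\zero$, the $\mathbf{c}_p$ and $\one$, and that the modification does not disturb $\Con L \iso \mathcal{O}(\J D)$. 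That verification \emph{is} the theorem. In the paper (Theorem~\ref{T:new2}) it is discharged for the unmodified lattice of \cite{gG14} by two structural facts about that lattice --- for every $x \notin \set{o,i,a_0,a_1}$ the sublattice $\set{o,i,a_0,a_1,x}$ is an $\SM 3$, and incomparable elements inside each gadget $S(p,q)$ are complementary --- combined with the already-established isomorphism $\Princ L \iso P$, which says precisely that the only principal congruences are $\zero$, $\one = \con{o,i}$, and the congruences $\con{a_p,b_p}$ generated by the covers $a_p \prec b_p$, hence all join-irreducible or $\zero$.

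A related misstep is your dismissal of Theorem~\ref{T:bounded} on the grounds that it ``gives no control over $\Con K$.'' No separate control is needed: apply the construction to the bounded ordered set $P = \Jp D$. Once you know $\Princ L \iso \Jp D$ and that every principal congruence $> \zero$ (including $\one$) is join-irreducible, you get $\J{\Con L} = \Princ L - \set{\zero} \iso \Jp D - \set{0} = \J D$, the last equality being exactly where the hypothesis of a join-irreducible unit enters; hence $\Con L \iso D$ by finite distributivity, with $\Princ L = \Jp{\Con L}$ corresponding to $\Jp D$. So there is no need to re-derive $\Con L \iso \mathcal{O}(\J D)$ by a separate coloring computation, nor to design new ``$m$-routing'' gadgetry --- the unmodified construction already has the property you want, and that is what must be (and in the paper is) checked. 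Your plan identifies the right sufficient condition and the right use of the hypothesis, but by insisting on redoing the congruence analysis with a modified construction and leaving that analysis unexecuted, it stops short of a proof.
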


Let us call a finite distributive lattice $D$ 
\emph{fully representable}, if every candidate in $D$ is representable.

The third result finds the smallest 
distributive lattice that is not fully representable.

\begin{theorem}\label{T:8}
The smallest finite distributive lattice $D$
that is not fully representable has $8$ elements.
\end{theorem}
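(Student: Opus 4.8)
The plan is to prove two complementary facts: (a) every finite distributive lattice with at most seven elements is fully representable; and (b) some distributive lattice with eight elements — for instance the eight-element Boolean lattice $B_3 = C_2^3$ — is not. Together these put the minimum at~$8$.

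For (a): since a finite distributive lattice $D$ is the lattice of all down-sets of the ordered set $\J D$, there are only finitely many $D$ with $|D| \le 7$, and I would run through the list, organised by $\J D$. Eleven of the twenty-one such $D$ satisfy $\Jp D = D$ — exactly those whose $\J D$ is empty, a chain, or a chain with a two-element antichain stacked on top (equivalently, those in which every nonempty proper down-set of $\J D$ has a greatest element) — and for these the only candidate is $Q = D$, which is representable by Theorem~\ref{T:D}. For the remaining $D$ we have $\Jp D \subsetneq D$; here the candidate $D$ is again handled by Theorem~\ref{T:D}, and the candidate $\Jp D$ by Theorem~\ref{T:unit} whenever $D$ has a join-irreducible unit. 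What is left is a short, explicit list of candidates — most conspicuously $\Jp D$ for $D = C_2 \times C_3$ — and each of these I would dispose of by exhibiting a finite lattice $L$ with $\Con L \iso D$ in which one designated join-reducible congruence is forced to be non-principal, every other element of $Q$ being $\zero$, $\one$, or join-irreducible and hence automatically a principal congruence. For $D = C_2 \times C_3$, writing $\alpha,\beta,\gamma$ for the three join-irreducible congruences (with $\beta < \gamma$), the unique nontrivial join-reducible congruence is $\alpha \vee \beta$, and I would build $L$ so that every principal congruence of $L$ lying above $\alpha \vee \beta$ equals $\one$; then $\alpha \vee \beta \notin \Princ L$, while the rest of $\Jp D$ is present automatically.

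For (b): take $Q = \Jp{B_3}$, the five-element candidate obtained by removing the three coatoms of $B_3$. Suppose $L$ is a finite lattice and $\gy$ an isomorphism of $\Con L$ onto $B_3$ that carries $\Princ L$ to~$Q$. The join-irreducible elements of $B_3$ are exactly its three atoms, so $\Con L$ has exactly three join-irreducible congruences $\alpha,\beta,\gamma$; they are pairwise incomparable, their three pairwise joins are the coatoms of $\Con L$, and $\alpha \vee \beta \vee \gamma = \one$. Every prime interval of $L$ generates a join-irreducible congruence, hence one of $\alpha,\beta,\gamma$. Fix a maximal chain $0_L = z_0 \prec z_1 \prec \cdots \prec z_n = 1_L$ of $L$. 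Since $\con{z_0,z_n} = \one = \alpha \vee \beta \vee \gamma$ while $\con{z_0,z_n} = \con{z_0,z_1} \vee \cdots \vee \con{z_{n-1},z_n}$, all three of $\alpha,\beta,\gamma$ occur among the $\con{z_i,z_{i+1}}$. Let $k$ be least with $\set{\con{z_0,z_1},\ldots,\con{z_{k-1},z_k}} = \set{\alpha,\beta,\gamma}$. Then $2 \le k \le n$, so $z_0 < z_{k-1} < z_n$, whereas $\set{\con{z_0,z_1},\ldots,\con{z_{k-2},z_{k-1}}}$ has exactly two elements; hence $\con{z_0,z_{k-1}} = \con{z_0,z_1} \vee \cdots \vee \con{z_{k-2},z_{k-1}}$ is one of the three coatoms of $\Con L$. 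But $\con{z_0,z_{k-1}}$ is a principal congruence, so $\Princ L$ meets the coatoms — contradicting $\Princ L = Q$. Therefore $B_3$ is not fully representable, and together with (a) the theorem follows.

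I expect the real work — and the main obstacle — to be the explicit constructions demanded in (a) for the candidates not reached by Theorems~\ref{T:D} and~\ref{T:unit}, above all $\Jp D$ for $C_2 \times C_3$: one has to produce finite lattices realising a prescribed finite distributive congruence lattice while keeping a designated congruence non-principal, and it is here that the machinery built up in the rest of the paper must be deployed. Part~(b), by contrast, is short once one notes that $\con{u,v}$ is the join of the colours (the join-irreducible congruences generated by the prime intervals) along any maximal chain of $[u,v]$.
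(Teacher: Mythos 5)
Your half (b) --- that $\SB 3$ is not fully representable because $Q=\Jp{\SB 3}$ is not representable --- is correct and is essentially the paper's own argument (Lemma~\ref{L:example}): both arguments walk along a chain from $0$ to $1$ whose steps generate atoms of $\Con L \iso \SB 3$, and both extract a principal congruence that is the join of two distinct atoms, hence a coatom, contradicting $\Princ L = \Jp{\SB 3}$. Your variant, taking a maximal chain so that every step is a prime interval and hence has an atom as its congruence, works; only note that your $k$ is in fact at least $3$ (not merely $2$), which is what guarantees that exactly two of the three atoms occur among the first $k-1$ steps --- nothing in the argument is harmed.

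The genuine gap is in half (a), the claim that every distributive lattice with at most seven elements is fully representable (the paper's Theorem~\ref{T:leq7}), and it is exactly the obstacle you yourself flag but do not overcome. Theorems~\ref{T:D} and~\ref{T:unit} dispose only of the candidates $Q=D$ and $Q=\Jp D$ with join-irreducible unit. They leave open (i) $Q=\Jp D$ when the unit of $D$ is join-reducible, most notably $D=\SC 2\times\SC 3$, where one must exhibit a finite lattice $L$ with $\Con L\iso \SC 2\times\SC 3$ in which the unique join-reducible dual atom fails to be principal while $\one$, which is also join-reducible, of course is principal; and (ii) the intermediate candidates $\Jp D\sci Q\sci D$ arising for the seven-element lattices with two join-reducible elements --- for $D_{7,5}$ these are the candidates $Q_u$ and $Q_v$, which the paper represents by the explicit lattices of Figures~\ref{F:D751} and~\ref{F:D75v}. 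Saying you ``would build $L$ so that every principal congruence above the designated join-reducible congruence equals $\one$'' states the required property but does not produce such a lattice, and producing these lattices, candidate by candidate for the distributive lattices of size $4$--$7$, is the substantive content of the $\leq 7$ half (the printed paper itself only illustrates it with $D_{7,5}$ and relegates the full case analysis to its extended arXiv version). As it stands, your proposal establishes the $8$-element counterexample but leaves the lower bound --- that no $D$ with $|D|\leq 7$ fails --- asserted rather than proved.
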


We use the notation as in \cite{CFL2}.
You can find the complete

\emph{Part I. A Brief Introduction to Lattices} and  
\emph{Glossary of Notation}

\noindent of \cite{CFL2} at 

\verb+tinyurl.com/lattices101+

For a bounded ordered set $Q$, 
the ordered set $Q^-$ is $Q$ with the bounds removed.

\subsection*{Outline}
Theorems~\ref{T:D} and~\ref{T:unit} are proved
in Section~\ref{S:Theorem1} and~\ref{S:Theorem2},
respectively. Section~\ref{S:Small} states 
(but does not prove) that 
all distributive lattices of size $\leq 7$ are fully representable; then it proves that the eight element Boolean lattice is not fully representable.

The version of this paper on arXive.org and
Research Gate contain the full Section~\ref{S:Small},
providing the diagrams for all distributive lattices
of size $4$--$7$, enumerating the candidates for representation, 
and representing each with a lattice.

There is an Addendum with some very recent results.

\section{Proving Theorem~\ref{T:D}}\label{S:Theorem1}

We need the following easy lemma from the folklore.

\begin{lemma}\label{L:folklore}
Let $L$ be a finite sectionally complemented lattice.
Then every congruence of $L$ is principal.
\end{lemma}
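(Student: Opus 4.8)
The plan is to attach to each congruence a single generating interval. Let $\Theta$ be a congruence of $L$, and consider the $\Theta$-class $C$ of the least element $0$. It is an ideal of $L$: it is closed under join because $\Theta$ is a congruence, and it is a down-set, since $x \equiv 0 \pmod{\Theta}$ and $y \le x$ force $y = x \wedge y \equiv 0 \wedge y = 0 \pmod{\Theta}$. As $L$ is finite, $C = [0, a]$ for some $a \in L$ (namely $a = \bigvee C$). I claim that $\Theta = \con{0, a}$, which proves the lemma.

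The inclusion $\con{0, a} \ci \Theta$ is immediate, since $0 \equiv a \pmod{\Theta}$. For the reverse inclusion, let $x \equiv y \pmod{\Theta}$; since $x \equiv y \pmod{\Theta}$ iff $x \wedge y \equiv x \vee y \pmod{\Theta}$, we may assume $x \le y$. Here sectional complementation enters: choose $c$ with $x \wedge c = 0$ and $x \vee c = y$. Then $c \le y$, so from $x \equiv y \pmod{\Theta}$ we get $c = c \wedge y \equiv c \wedge x = 0 \pmod{\Theta}$; hence $c \in C$, that is, $c \le a$. Consequently, modulo $\con{0, a}$ we have $c = c \wedge a \equiv c \wedge 0 = 0$, and therefore $x = x \vee 0 \equiv x \vee c = y \pmod{\con{0, a}}$. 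So $\Theta \ci \con{0, a}$, as required.

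There is essentially no obstacle here — this is why the statement is folklore. The only points that use the substitution properties of congruences are the two one-line computations with meets and joins, and the only genuine choice is to apply sectional complementation in the interval $[0, y]$ (so that $c$ lies below $y$, which is what makes $c \equiv 0$). An equivalent route, in the same spirit, would first show that every principal congruence $\con{u, v}$ with $u \le v$ equals $\con{0, t}$ for a sectional complement $t$ of $u$ in $[0, v]$, then observe $\con{0, s} \vee \con{0, t} = \con{0, s \vee t}$, and finally invoke that every congruence of a finite lattice is a finite join of principal congruences; but the direct argument above is shorter.
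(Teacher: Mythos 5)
Your proof is correct and follows essentially the same route as the paper: both identify the largest element $a$ of the $0$-class (the top of that ideal) and use a sectional complement of $x$ (resp.\ of $u \wedge v$ in $[0, u \vee v]$) to show $\Theta = \con{0,a}$. Your write-up merely spells out the details the paper leaves implicit.
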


\begin{proof}
For a congruence $\bga$, let $a$ be the largest element in $L$
satisfying $\cng a = 0 (\bga)$. 
Then $\con{a,0} \leq \bga$. 
Conversely, the congruence $\cng u = v (\bga)$ in $L$ holds
if{} $\cng w = 0 (\bga)$, 
where $w$ is a sectional complement of $u \mm v$ in $u \jj v$.
Since $w \leq a$, it follows that $\con{a,0} = \bga$
and so $\bga$ is principal.
\end{proof}

This result is implicit in G. Gr\"atzer~\cite{GS59a}
(see also G. Gr\"atzer and E.\,T. Schmidt~\cite{GS62}),
since in a sectionally complemented lattice $L$
every congruence is a standard congruence and if $L$ is finite,
then every standard congruence is of the form $\con{0,s}$,
where $s$ is a standard element. 
See also \cite[Section III.2]{LTF}. 

We need now the main result of G.~Gr\"atzer and E.\,T. Schmidt 
\cite{GS62} (see also Theorem 8.5 in \cite{CFL2}).

\begin{theorem}\label{T:old}
Every finite distributive lattice $D$ 
can be represented as the congruence lattice 
of a finite sectionally complemented lattice $L$.
\end{theorem}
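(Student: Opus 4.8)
The plan is to reduce the statement to a question about finite ordered sets and then to build $L$ out of small sectionally complemented gadgets. Since $\Con L$ is automatically a finite distributive lattice, and a finite distributive lattice is determined up to isomorphism by the ordered set of its join-irreducible elements, in order to get $\Con L \iso D$ it suffices to construct a finite sectionally complemented lattice $L$ with $\J{\Con L} \iso \J D$. Writing $P = \J D$, the goal becomes: given an arbitrary finite ordered set $P$, construct a finite sectionally complemented lattice $L$ whose ordered set of join-irreducible congruences is isomorphic to $P$ (each such congruence being then automatically generated by a prime interval).

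I would carry this out by induction on $|P|$, adjoining one join-irreducible congruence at a time. The base case is immediate. For the inductive step, let $m$ be a maximal element of $P$, put $P' = P \setminus \set{m}$, and suppose a finite sectionally complemented $L'$ with $\J{\Con L'} = \set{\Theta_p : p \in P'} \iso P'$ has already been built. One must enlarge $L'$ to a finite sectionally complemented $L$ that gains exactly one new join-irreducible congruence $\Theta_m$, lying above $\Theta_q$ for every lower cover $q$ of $m$ --- hence, by transitivity, above exactly the $\Theta_p$ with $p < m$ --- and incomparable to the rest. The device is a \emph{valve}: a small sectionally complemented configuration, attached to $L'$ so as to connect a prime interval $I_m$ down to the prime intervals coding the lower covers of $m$, with the property that collapsing $I_m$ forces the collapse of those lower prime intervals, while collapsing any one of the latter does \emph{not} force the collapse of $I_m$. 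Such valves exist already at small size: the seven-element sectionally complemented lattice obtained from $M_3$, with unit $t$, by adjoining a new unit $\hat 1$ above $t$ and an atom $d$ complementing $t$ in $[0,\hat 1]$ has the feature that $\con{0,d}$ strictly contains the congruence generated by any edge of the $M_3$, but not conversely. It is convenient to perform the whole construction inside the framework of chopped lattices and pass to the ideal lattice at the end, since this does not alter the congruence lattice.

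Two things then have to be verified. First, that $L$ is sectionally complemented: this should follow from a routine step-by-step check, since each gadget used is itself sectionally complemented and the way the gadgets are fitted together preserves the property. Second, that $\Con L \iso D$: one shows --- using, among other things, that each $M_3$ in the construction is simple, so that all its prime intervals generate one and the same congruence of $L$ --- that the join-irreducible congruences of $L$ are exactly the $\Theta_p$, $p \in P$, ordered isomorphically to $P$; then $\Con L \iso D$ by the reduction in the first paragraph, which completes the induction.

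The main obstacle is getting $\Con L$ exactly right: the valves must force \emph{every} comparability $q < p$ prescribed by $P$ (so that $\Theta_q < \Theta_p$) and \emph{only} those, and --- the delicate point --- no prime interval of $L$ may generate a congruence outside the intended list $\set{\Theta_p : p \in P}$, so that nothing ``leaks'' and $\J{\Con L}$ is genuinely just $P$. Pinning down valve configurations that realize this strictly one-directional forcing, that can be superimposed when a single element of $P$ has several lower covers (and a single element several upper covers), and that keep $L$ sectionally complemented throughout, is the heart of the matter; once the gadgets are correctly specified, the congruence computation and the complementation check are routine bookkeeping.
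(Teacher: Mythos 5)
The paper does not actually prove Theorem~\ref{T:old}: it is the classical 1962 representation theorem of Gr\"atzer and Schmidt, and the paper simply cites \cite{GS62} (see also Theorem 8.5 of \cite{CFL2}) and then combines it with Lemma~\ref{L:folklore} to get Theorem~\ref{T:D}. Your outline reproduces the strategy of that known proof: reduce to realizing the ordered set $P = \J D$ as the ordered set of join-irreducible congruences, code each covering relation of $P$ by a small sectionally complemented ``valve,'' assemble the valves, and pass from a chopped lattice to its ideal lattice. Your seven-element gadget is sound as far as it goes: it is sectionally complemented, and $\con{0,d}$ is strictly above the single congruence generated by the edges of the $\SM 3$, so it does give one-directional forcing for a single covering pair.

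As a proof, however, the proposal has a genuine gap, and you name it yourself: ``pinning down valve configurations \dots is the heart of the matter.'' That heart is exactly the content of the Gr\"atzer--Schmidt construction and it is not supplied. Concretely, your induction step -- extend a sectionally complemented $L'$ realizing $P'=P\setminus\set{m}$ to an $L$ realizing $P$ -- requires an extension that (i) creates exactly one new join-irreducible congruence lying above precisely the prescribed ones, (ii) neither merges, destroys, nor reorders the old join-irreducible congruences (a congruence-controlled extension property you never argue), and (iii) stays sectionally complemented; moreover the prime intervals of $L'$ generating the congruences $\Theta_q$ for the lower covers $q$ of $m$ need not be positioned (e.g., over shared atoms) so that your valve can be attached to all of them at once. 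The classical argument avoids this inductive difficulty: it builds, in one step, a chopped lattice by amalgamating gadgets $N(p,q)$ for all covering pairs $q \prec p$ of $\J D$ over common atoms, computes the congruences of the chopped lattice via compatible congruence vectors, and uses the Atom Lemma to show the ideal lattice is sectionally complemented with congruence lattice $\iso D$; in particular, excluding ``leaks'' (prime intervals generating unintended join-irreducible congruences) is precisely the nontrivial computation, not routine bookkeeping. So the plan points at the right proof, but the proof itself is missing; as the paper does, one should either cite \cite{GS62} or carry out the chopped-lattice construction in full.
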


Theorem~\ref{T:old} and a reference to Lemma~\ref{L:folklore} 
completes the proof of Theorem~\ref{T:D}.

Rewriting \eqref{E:repr} for $D = \Con L$, we get
\begin{equation}\label{E:repr2}
   \Jp {\Con L} \ci Q \ci \Con L.
\end{equation}

\section{Proving Theorem \ref{T:unit}}\label{S:Theorem2}
We prove the following result.

\begin{theorem}\label{T:new2} \hfill
\begin{enumeratei}
\item Let $P$ be a finite ordered set with zero and unit. 
Then there is a finite lattice~$L$ such that
$P \iso \Princ L$. 

\item This finite lattice $L$ can be constructed such that
\begin{enumeratea}
\item the unit congruence, $\one$, is join-irreducible;
\item for every congruence $\bga > \zero$ of $L$,
if $\bga$ is principal, then it is join-irreducible.
\end{enumeratea}
\end{enumeratei}
\end{theorem}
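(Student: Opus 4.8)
The plan is to build the lattice $L$ in two stages, exploiting Theorem~\ref{T:bounded}. First, apply Theorem~\ref{T:bounded} to the finite bounded ordered set $P$ to obtain a finite bounded lattice $K$ with $P \iso \Princ K$. This $K$ already witnesses part~(i), but it need not satisfy the join-irreducibility conditions in part~(ii). The idea is then to modify $K$ by attaching gadgets at the top so that the unit congruence becomes join-irreducible, while being careful not to create any new principal congruences (so that $\Princ$ is unchanged up to isomorphism) and not to create any new join-reducible principal congruences.

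Concretely, the key construction step is to form a suitable ordinal-type extension of $K$ at its top --- for instance, glue a copy of the four-element lattice $\msf{N}_5$ or a ``hat'' built from $K$ itself onto the unit of $K$, or more likely take a variant of the construction from \cite{gG14} in which one adjoins a chain or a doubled covering pair above $\one_K$ so that every congruence collapsing the new top edge must collapse a cofinal part of $K$. Such a configuration forces the unit congruence of $L$ to be join-irreducible: the new prime interval at the top generates $\one_L$, and no proper join of congruences can reach it. The second requirement, condition~(ii)(b), is more delicate: for each congruence $\bga > \zero$ that happens to be principal in $L$, I need it to be join-irreducible. Since $P$ has a unit, $\Princ L \setminus \{\zero\}$ will have a largest element, namely $\one_L$; the structure I build should ensure that the only principal congruences below $\one_L$ are the join-irreducible ones already coming from $\J{\Con K}$, i.e.\ that no join-reducible congruence of $\Con L$ is generated by a single prime interval. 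This is where a careful choice of the gadget and a careful bookkeeping of which intervals are prime and which congruences they generate is essential.

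The main steps, in order, are: (1) invoke Theorem~\ref{T:bounded} to get $K$ with $\Princ K \iso P$; (2) describe the explicit gadget $G$ to be glued above the top of $K$ (and possibly symmetric modifications) and define $L$; (3) compute $\Con L$, showing $\Con L \iso \Con K$ (or a controlled one-element extension that is then identified), and identify the join-irreducible congruences; (4) verify $\Princ L \iso P$, i.e.\ that the gadget adds no new principal congruences except possibly reproducing $\one$; (5) check (ii)(a), that $\one_L \in \J{\Con L}$, which follows from the gadget forcing any representation of $\one_L$ as a join to be trivial; and (6) check (ii)(b) by going through the congruences $\bga$ that are principal and showing each is join-irreducible --- the join-reducible congruences of $\Con L$ must fail to be generated by a single prime interval because of the way the prime intervals of $L$ sit relative to the glued structure.

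The hard part will be Step~(6) together with the simultaneous demand in Step~(4). It is easy to make the unit join-irreducible by brute force, and easy to not disturb the abstract congruence lattice; the genuine difficulty is doing both while also guaranteeing that \emph{every} principal congruence other than $\zero$ lands in $\J{\Con L}$ --- in other words, that the set of prime intervals of $L$ generates exactly $\J{\Con K}$ together with $\one$ and nothing join-reducible in between. This requires the gadget to be ``rigid'' enough that prime intervals inside it either generate old join-irreducible congruences or generate $\one_L$, and that prime intervals of $K$ itself are not merged by the gluing into ones generating larger, join-reducible congruences. I expect the resolution to mirror the techniques of \cite{gG14}: use a construction in which the congruence generated by each prime interval is transparently read off from a colouring, and arrange the colouring so that the only ``large'' colour is the one forcing $\one$. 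Verifying the colouring has the required properties --- especially that no prime interval gets a join-reducible colour --- is the crux of the argument.
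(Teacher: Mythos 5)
There is a genuine gap, and it is twofold. First, your text is a plan rather than a proof: the gadget $G$ is never specified, and steps (3)--(6) -- computing $\Con L$, verifying $\Princ L \iso P$, and checking (ii)(a) and (ii)(b) -- are exactly the content of the theorem, yet they are left as things you ``expect'' to work. You yourself identify step (6) as the crux and do not supply it. Second, and more fundamentally, the overall strategy of taking $K$ from Theorem~\ref{T:bounded} as a black box and then repairing it by gluing a gadget above its unit cannot establish condition (ii)(b). That condition is not a consequence of the abstract statement $\Princ K \iso P$: a lattice $K$ witnessing Theorem~\ref{T:bounded} may perfectly well contain elements $a < b$ with $\con{a,b} = \bga_1 \jj \bga_2$ join-reducible and strictly below $\one_K$; such a principal congruence lives entirely inside $K$, and attaching material over the top of $K$ does not make it disappear or cease to be principal. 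So no choice of top gadget, however rigid, can convert an arbitrary witness of part (i) into a witness of part (ii); the property in (ii)(b) must be read off from a \emph{specific} construction, not retrofitted.

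That is what the paper does, and it is simpler than your plan suggests: no extra gadget is added at all. The proof recalls the explicit construction of \cite{gG14} -- the frame $F$ with elements $o$, $i$, $a_p$, $b_p$ ($p \in P$, with $a_0 = b_0$, $a_1 = b_1$) and the inserted sublattices $S(p,q)$ for $p < q \in P^-$ -- and observes that in this lattice $L$ the principal congruences are exactly $\zero$, $\one$, and the congruences $\con{a_p,b_p}$ for $p \in P^-$ (this is the isomorphism $P \iso \Princ L$ of \cite{gG14}, forced by the $\SM 3$ sublattices through $o, i, a_0, a_1$ and the complementation inside each $S(p,q)$). Since $a_p \prec b_p$ in $L$, each $\con{a_p,b_p}$ is generated by a prime interval and hence is join-irreducible, and $\one$ is likewise join-irreducible; this verifies (a) and (b) simultaneously. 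Your closing remark that the resolution should ``mirror the techniques of \cite{gG14}'' points in the right direction, but until you commit to that construction and actually verify that its prime intervals generate only $\one$ and the congruences $\con{a_p,b_p}$, the theorem is not proved -- and the black-box-plus-gadget framing that organizes your write-up would have to be abandoned, not completed.
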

\begin{proof}
The first statement is the finite case of Theorem~\ref{T:bounded}.
The second statement follows 
from the way the lattice $L$ is constructed; 
to~verify it, we have to briefly review this construction.

For a finite bounded ordered set $P$, 
we construct the lattice $L$ as follows. 

Let $0$ and $1$ denote the zero and unit of $P$, respectively.

We first construct the lattice $F$, the \emph{frame}, 
consisting of the elements \text{$o$, $i$} 
and the elements $a_p, b_p$ for every $p \in P$,
where $a_p \neq b_p$ for every $p \in P^-$
and $a_0 = b_0$, $a_1 = b_1$. These elements are ordered and
the lattice operations are formed as in Figure~\ref{F:F}. 
\begin{figure}[htb]
\centerline{\includegraphics[scale=1.0]{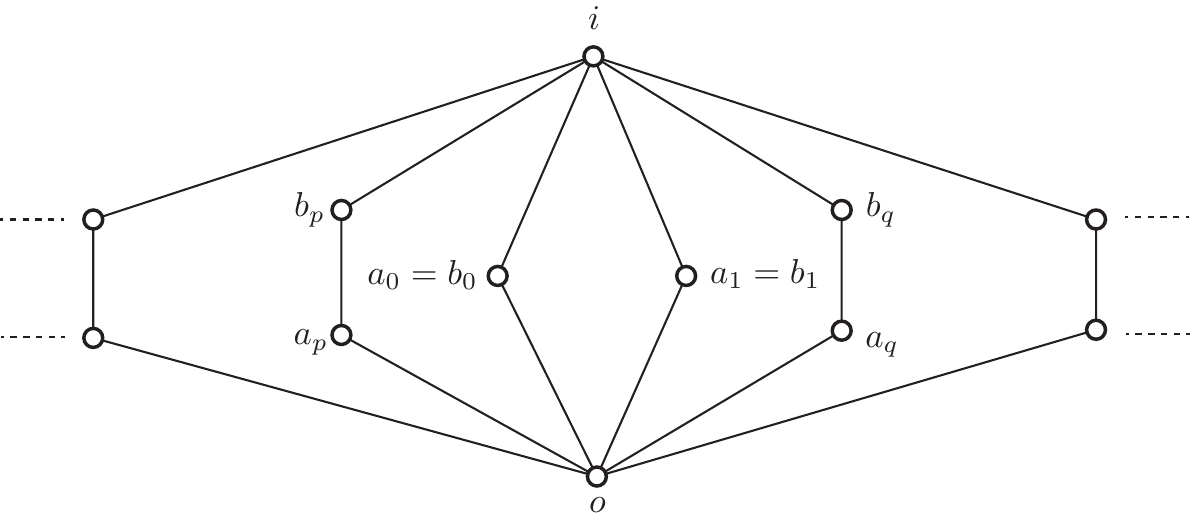}}
\caption{The frame lattice $F$}\label{F:F}
\end{figure}

The lattice $L$ is as an extension of $F$. 
We add five elements
to the sublattice 
\[
   \set{o, a_p, b_p, a_q, b_q, i}
\]
of $F$ for $p < q \in P^-$, 
as illustrated in Figure \ref{F:S},
to form the sublattice $S(p,q)$.  

\begin{figure}[b!]
\centerline{\includegraphics[scale=1.0]{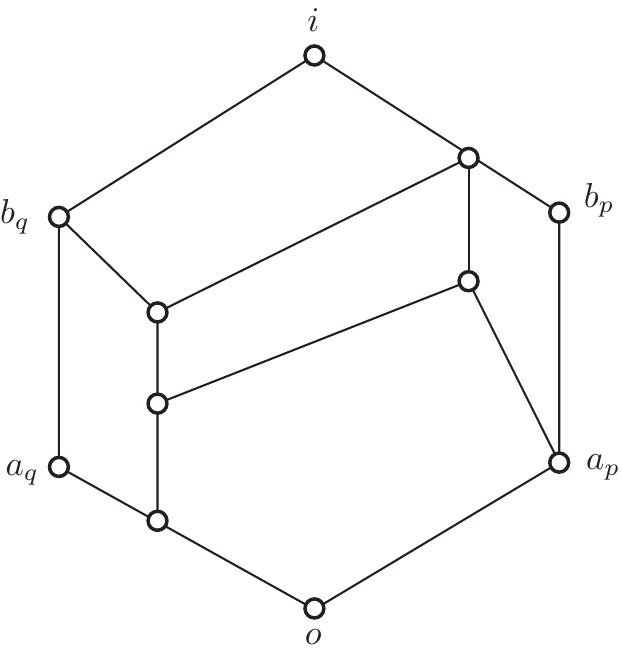}}
\caption{The lattice $S = S(p, q)$}\label{F:S}
\end{figure}

For $p \in \Pd$, let $C_p = \set{o < a_p < b_p < i}$
be a four-element chain.

Let $L$ be the lattice $F$ 
with all the $S(p,q)$ for $p < q \in P^-$ inserted.

There are two crucial observations about $L$.
\begin{enumeratei}
\item For any element $x\in L-\set{o, i, a_0, a_1}$, 
the sublattice $\set{o, i, a_0, a_1, x}$ is isomorphic to $\SM 3$.
\item For incomparable pairs $x,y \in L$ that belong
to an $S(p,q)$, it holds that $x,y$ are complementary.
\end{enumeratei}

It follows that the map
\begin{equation}\label{E:xx}
  p \mapsto 
      \begin{cases}
         \con{a_p, b_p}& \text{for $p \in P^-$};\\
         \zero & \text{for $p =0$};\\
         \one & \text{for $p =1$}
      \end{cases}
\end{equation}
is an isomorphism between $P$ and $\Princ L$. 
Observe that $a_p \prec b_p$ in $L$ and 
therefore $\con{a_p, b_p}$ is join-irreducible in $\Con L$.
Also $\zero = \con{o, o}$ and $\one = \con{o,i}$, 
veri\-fying statement (b).
\end{proof}

By Theorem~\ref{T:new2},
$Q = \Jp {\Con L} = \Jp {D} = \Princ L$
is representable.

There are many related constructions. We mention two.

\begin{enumeratei}
\item Let the finite lattice $K$ 
represent $Q \ci D = \Con L$.  
Form the lattice $L$ that
is an $\SM 3$ with $K$ replacing one of the atoms.  
Then $L$ represents $Q + \SC 1$
in $D + \SC 1$.

\item Similarly, with $Q, D, K$ as in (i), 
form $L'$ that is $\SC 2^2$ 
with $K$ replacing one of the atoms.  
Then $L'$ represents $Q \ci D$
with $\SC 2^2$ glued to the top.
\end{enumeratei}
 
\section{Small distributive lattices}\label{S:Small}

In this section, we will settle Problem~\ref{P:main} 
for distributive lattices of size $\leq 8$.

We start with a few easy statements.
 
Let $D$ be a finite distributive lattice 
and let $ Q \ci D$.
We call a candidate proper, if $Q \sci D$. 

The following statement is evident.

\begin{lemma}\label{L:easy}
Let $L$ be a finite lattice. If $\Jp {\Con L} = \Con L$,
then there is no proper candidate $Q$ for $\Con L$.
Therefore, $D$ is fully representable.
\end{lemma}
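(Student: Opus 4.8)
The plan is to unwind the definitions. Suppose $L$ is a finite lattice with $\Jp{\Con L} = \Con L$. Let $Q \ci \Con L$ be any candidate, so that by \eqref{E:repr2} we have $\Jp{\Con L} \ci Q \ci \Con L$. Since the two outer terms coincide by hypothesis, $Q = \Con L$; in particular there is no \emph{proper} candidate (one with $Q \sci \Con L$).

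For full representability we must show every candidate in $D = \Con L$ is representable. The only candidate is $Q = D = \Con L$ itself, and this is representable by Theorem~\ref{T:D} (take the lattice $L$ itself, or the sectionally complemented lattice furnished by Theorem~\ref{T:old}): indeed $\Princ L' = \Con L'$ whenever $L'$ is finite sectionally complemented by Lemma~\ref{L:folklore}, and $\Con L' \iso D$. Hence $D$ is fully representable.

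I expect no real obstacle here; the statement is essentially a bookkeeping consequence of the inclusion \eqref{E:repr} defining candidates together with Theorem~\ref{T:D}. The only point that requires a moment's care is noting that the hypothesis is about a particular lattice $L$ but the conclusion "$D$ is fully representable" is a property of the abstract distributive lattice $D = \Con L$ — so one should state it as: if some finite lattice $L$ with $\Con L \iso D$ satisfies $\Jp{\Con L} = \Con L$, then $D$ is fully representable. This is exactly the form in which the lemma will be applied (e.g. to $D$ a chain, where $\J D = D^-$, so $\Jp D = D$).
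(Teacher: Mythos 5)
Your proposal is correct and matches the paper's intent: the paper gives no proof, simply calling the statement evident, and your bookkeeping argument (the candidate inclusions force $Q = \Con L$, and that single candidate is representable by Theorem~\ref{T:D}) is exactly the intended justification. Your closing remark about the hypothesis really being a property of $D$ itself (i.e.\ $\Jp D = D$) is a fair clarification of how the lemma is applied, e.g.\ in Corollary~\ref{C:chain}.
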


\begin{corollary}\label{C:chain}
Let $D$ be a finite chain. 
Then there is only one candidate $Q$ for $D$,
and it is representable.
\end{corollary}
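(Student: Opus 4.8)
The plan is to reduce the corollary to Theorem~\ref{T:D} by a one-line computation of $\Jp D$. First I would record that in a finite chain the join-irreducible elements are precisely the nonzero ones: whenever $x = a \jj b$, the elements $a$ and $b$ are comparable, so $x \in \set{a,b}$; hence the only element that fails to be join-irreducible is $0$, the empty join. Thus $\J D = D \setminus \set{0}$, and therefore
\[
   \Jp D = \set{0,1} \uu \J D = \set{0,1} \uu (D \setminus \set{0}) = D .
\]

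Next I would invoke the candidate condition \eqref{E:repr}: every candidate $Q$ for $D$ satisfies $\Jp D \ci Q \ci D$, and since $\Jp D = D$, this forces $Q = D$. So $D$ has exactly one candidate. Finally, Theorem~\ref{T:D} states that $Q = D$ is representable for every finite distributive lattice $D$, in particular for a finite chain, which yields the second assertion. (Equivalently, by Theorem~\ref{T:old} a finite chain is $\Con L$ for some finite lattice $L$, so $\Jp {\Con L} = \Con L$, and Lemma~\ref{L:easy} applies at once.)

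There is no genuine obstacle here. The only point worth checking is that the degenerate cases behave: for the one-element chain $\set{0} = \set{1}$ we have $\J D = \emptyset$, yet still $\Jp D = \set{0,1} = D$; and for the two-element chain, $1$ is an atom and hence join-irreducible. Both are subsumed by the displayed identity, after which Theorem~\ref{T:D} does the remaining work.
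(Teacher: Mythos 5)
Your proof is correct and follows essentially the same route as the paper: the corollary is stated right after Lemma~\ref{L:easy} and is intended as an immediate consequence of the fact that $\Jp D = D$ for a finite chain, with representability of the unique candidate $Q = D$ coming from Theorem~\ref{T:D}. Your computation of $\Jp D$ and the check of the degenerate cases are exactly the (unwritten) content of the paper's argument.
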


\begin{lemma}\label{L:onejr}
Let $D$ be a finite distributive lattice. 
We assume that the unit element of $D$ is join-irreducible
and that $D$ has a unique join-reducible element $d \in D^-$.
Then $D$ has a unique proper candidate and it is representable.
\end{lemma}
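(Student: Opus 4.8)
The plan is to separate the statement into its combinatorial half (there is a unique proper candidate) and its representation half (that candidate is representable), the latter being a direct appeal to Theorem~\ref{T:unit}.

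First I would identify $D \setminus \Jp D$. Recall that $\Jp D = \set{0,1} \uu \J D$. By hypothesis the unit of $D$ is join-irreducible, so $1 \in \J D \ci \Jp D$, and of course $0 \in \Jp D$; hence $D \setminus \Jp D$ is exactly the set of join-reducible elements lying in $D^-$, which by hypothesis is the singleton $\set{d}$. Therefore $\Jp D \sci D$, so $Q = \Jp D$ is a proper candidate; and since every candidate $Q$ satisfies $\Jp D \ci Q \ci D$ and $D$ exceeds $\Jp D$ only by the single element $d$, the only candidate with $Q \neq D$ is $Q = \Jp D$. Thus $D$ has exactly one proper candidate.

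For representability I would simply invoke Theorem~\ref{T:unit}: $D$ is a finite distributive lattice with join-irreducible unit, so $Q = \Jp D$ is representable. Concretely, one may take the lattice $L$ built in the proof of Theorem~\ref{T:new2} for the bounded ordered set $P = \Jp D$, for which $\Con L \iso D$ and $\Princ L$ corresponds to $\Jp D$.

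I do not expect a real obstacle here; the lemma is essentially the packaging of Theorem~\ref{T:unit} in the form convenient for the classification of distributive lattices of size $\leq 8$. The one point to check with care is the computation of $D \setminus \Jp D$: it is precisely the hypothesis that the unit is join-irreducible that keeps $1$ inside $\Jp D$, so that the complement does not pick up the top element and is genuinely the singleton $\set{d}$.
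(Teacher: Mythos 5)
Your proof is correct and follows the same route as the paper: the only proper candidate is $Q = D - \set{d} = \Jp D$ (the hypothesis that the unit is join-irreducible ensuring $1 \in \Jp D$), and its representability is exactly Theorem~\ref{T:unit}. You have merely spelled out the identification $D \setminus \Jp D = \set{d}$, which the paper leaves implicit.
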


\begin{proof}
$Q = D - \set{d}$ is the only proper candidate. 
Since $Q = \Jp{D}$, it is representable by Theorem~\ref{T:unit}.
\end{proof}

So let $D$ be a finite distributive lattice of size $\leq 8$.
If $D$ satisfies $\Jp{D} = D$, then the Problem 
is settled by Lemma~\ref{L:easy}, so we can assume that
\begin{equation}\label{E:repr3}
   \Jp {D} \sci D.
\end{equation}

\subsection{Distributive lattices of size $\boldsymbol{\leq}
\boldsymbol{7}$}
\label{S:7}
In this section, we state the first half of Theorem~\ref{T:8}.

\begin{theorem}\label{T:leq7}
Let $D$ be a distributive lattice of size $\leq 7$.
Then $D$ is fully representable.
\end{theorem}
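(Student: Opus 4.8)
The plan is to prove Theorem~\ref{T:leq7} by a finite case analysis, organized so that almost every distributive lattice of size $\leq 7$ is disposed of by the general lemmas already proved, leaving only a handful of genuine constructions. First I would stratify by size. For $|D| \leq 3$ the lattice is a chain, so Corollary~\ref{C:chain} applies. For $4 \leq |D| \leq 7$, I would first invoke \eqref{E:repr3}: if $\Jp D = D$ then Lemma~\ref{L:easy} finishes $D$, so we may assume $D$ has at least one join-reducible element in $D^-$. The next reduction is Lemma~\ref{L:onejr}: if the unit of $D$ is join-irreducible and there is a \emph{unique} join-reducible element in $D^-$, the unique proper candidate is $\Jp D$ and Theorem~\ref{T:unit} applies. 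A quick bookkeeping step is needed here: list, up to isomorphism, all distributive lattices with $4,5,6,7$ elements (there are $2,3,5,7$ of them respectively, via their ordered sets of join-irreducibles by Birkhoff duality), and for each one record $\J D$, $\Jp D$, the join-reducible elements of $D^-$, and whether the unit is join-irreducible.

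After these reductions the surviving cases are exactly the $D$ that have either a join-reducible unit, or two or more join-reducible elements in $D^-$; in the range $|D|\leq 7$ this is a short explicit list. For each survivor I would enumerate every candidate $Q$ with $\Jp D \subseteq Q \subseteq D$ — i.e. every subset of the join-reducible elements of $D^-$ together with the forced elements — and exhibit a finite lattice $L$ with $\Con L \iso D$ and $\Princ L$ matching $Q$. The two extreme candidates $Q = \Jp D$ and $Q = D$ are handled uniformly by Theorems~\ref{T:unit} and~\ref{T:D} (the former requires a join-irreducible unit; when the unit is join-reducible one must instead use that $Q=\Jp D$ is not even forced to contain the unit's... actually the unit is always in $\Jp D$, so Theorem~\ref{T:unit} still needs the join-irreducible-unit hypothesis and the join-reducible-unit subcases must be treated by hand or by the gluing constructions). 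For the intermediate candidates I would lean on the two ``related constructions'' displayed at the end of Section~\ref{S:Theorem2}: starting from a lattice $K$ representing a candidate $Q' \subseteq D'$ for a smaller distributive lattice $D'$, replacing an atom of $\SM 3$ by $K$ realizes $Q' + \SC 1$ in $D' + \SC 1$, and replacing an atom of $\SC 2^2$ by $K$ glues a $\SC 2^2$ on top. Since most small distributive lattices with a join-reducible element decompose as $D' + \SC 1$ or as a $\SC 2^2$-gluing, these two moves, iterated, should cover the bulk of the remaining candidates.

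The main obstacle will be the cases that do \emph{not} decompose via those two constructions — in particular $D = \SC 2^3$ itself, the $8$-element Boolean lattice, is explicitly excluded here but the $6$- and $7$-element lattices of ``fence'' or ``N''-shaped join-irreducible posets, and lattices whose unit is join-reducible and which are not a $\SC 2^2$-gluing, may each require an ad hoc finite lattice. For those I would build $L$ directly from a frame-and-$S(p,q)$ construction as in the proof of Theorem~\ref{T:new2}, but with some prime intervals deliberately \emph{not} made perspective to a comparable pair, so that the corresponding join-irreducible congruence is realized but is \emph{not} principal — this is precisely the mechanism by which $Q$ can be a proper candidate yet representable. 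Checking in each such handmade example that $\Con L$ is the intended $D$ and that $\Princ L$ is exactly $Q$ (no accidental principal congruences, all of $Q$ genuinely principal) is the delicate, verification-heavy part; but because there are only finitely many $(D,Q)$ pairs with $|D|\leq 7$, each admitting a small $L$, the verifications are finite and mechanical. I would present the full table of lattices, candidates, and witnessing lattices — as the excerpt notes is done in the arXiv version — and remark that the argument is uniform enough to suggest where it breaks at size $8$, motivating Theorem~\ref{T:8}.
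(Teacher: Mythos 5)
Your overall skeleton is the same as the paper's: dispose of chains by Corollary~\ref{C:chain}, of lattices with $\Jp D = D$ by Lemma~\ref{L:easy}, of the extremal candidates by Theorems~\ref{T:D} and~\ref{T:unit} (and Lemma~\ref{L:onejr}), and then treat the finitely many remaining pairs $(D,Q)$ by exhibiting an explicit witnessing lattice for each. That is exactly how the paper proceeds (it illustrates the pattern on $D_{7,5}$ and defers the full catalogue to the arXiv version). The difficulty is that your proposal never actually produces a witnessing lattice for a single intermediate candidate, and the two general mechanisms you offer in its place do not suffice. The gluing constructions (i) and (ii) from Section~\ref{S:Theorem2} only extend $D$ at the top: construction (i) turns a representation of $Q' \ci D'$ into one of $Q' + \SC 1 \ci D' + \SC 1$, so it can only reach candidates of the form $Q' + \SC 1$ where $Q'$ is itself a candidate for $D'$, hence contains the unit of $D'$. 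It therefore cannot produce a candidate such as $Q_v = D_{7,5} - \set{v}$, where $v$ is the join-reducible coatom sitting directly below the join-irreducible unit: here $Q_v \cap D'$ omits the unit of $D'$ and is not a candidate for $D'$ at all. These ``missing coatom'' candidates are precisely the ones the paper must handle by hand (Figures~\ref{F:D751} and~\ref{F:D75v}), and they are not a fringe case but the heart of the theorem.

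Your fallback --- modifying the frame-and-$S(p,q)$ construction of Theorem~\ref{T:new2} so that some prime intervals are, and others are not, made perspective --- is only a gesture. That construction is engineered so that \emph{every} nonzero principal congruence is join-irreducible, which is why it yields exactly the extremal candidate $\Jp D$; to realize an intermediate $Q$ you must force a prescribed join $\bga_i \jj \bga_j$ of join-irreducible congruences to be principal while simultaneously preventing other joins (e.g., $p \jj r$ versus $q \jj r$ in the $D_{7,5}$ example) from becoming principal, and while keeping $\Con L$ isomorphic to the given $D$. Nothing in your outline shows this can be done, and verifying ``no accidental principal congruences'' is exactly the nontrivial content you postpone as ``mechanical.'' So the proof is incomplete where the theorem is hardest; the reductions you give are correct but are the easy half. (A small bookkeeping point: there are $8$, not $7$, distributive lattices with $7$ elements, though this does not affect the method.)
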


We illustrate the proof with 
the distributive lattice $D = D_{7,5}$, see Figure~\ref{F:D75}.
\begin{figure}[htb]
\centerline{\includegraphics[scale = 1]{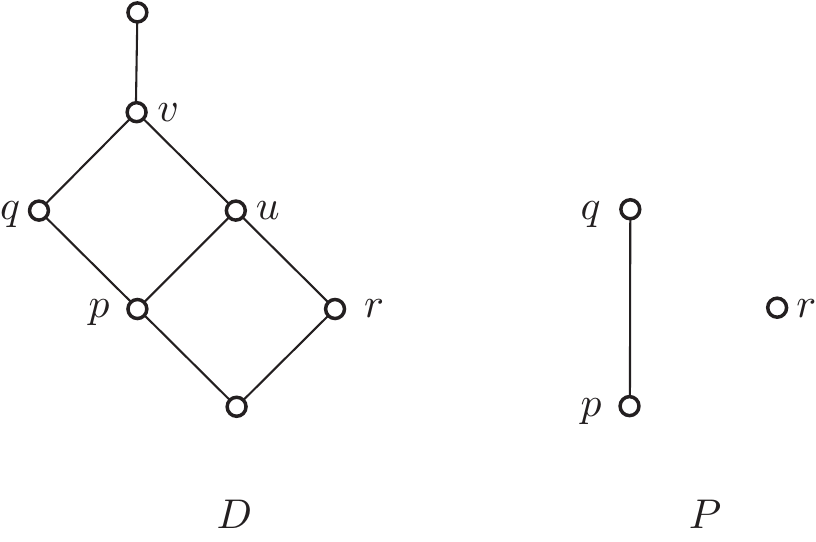}}
\caption{The distributive lattice $D = D_{7,5}$ 
and $P = \J D - \set{1}$}\label{F:D75}
\end{figure}
Let $u \prec v$ be the two join-reducible elements of $D$.
Then there are three proper candidates, $Q_{u,v} = D - \set{u,v},
Q_{u} = D - \set{u}$ and $Q_{v} = D - \set{v}$.
Since the unit element of $D$ is join-irreducible,
$Q_{u,v}$ is done by Theorem~\ref{T:unit}.

We represent the candidates $Q_{u}$ and $Q_{v}$ 
in Figures~\ref{F:D751} and~\ref{F:D75v}.

\begin{figure}[b!]
\centerline{\includegraphics{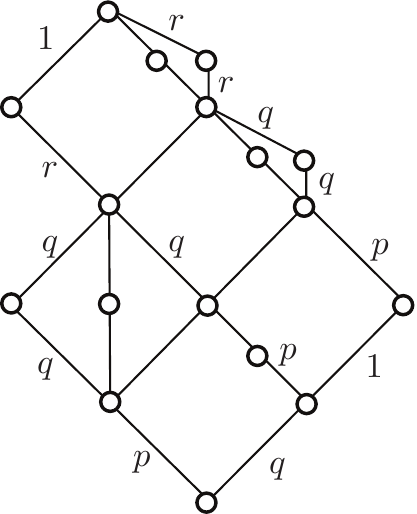}}
\caption{Representing $Q_{u}$ for $D = D_{7,5}$;
the congruence $q \jj r$ is principal, $p \jj r$ is not}\label{F:D751}
\end{figure}

\begin{figure}[htp]
\centerline{\includegraphics{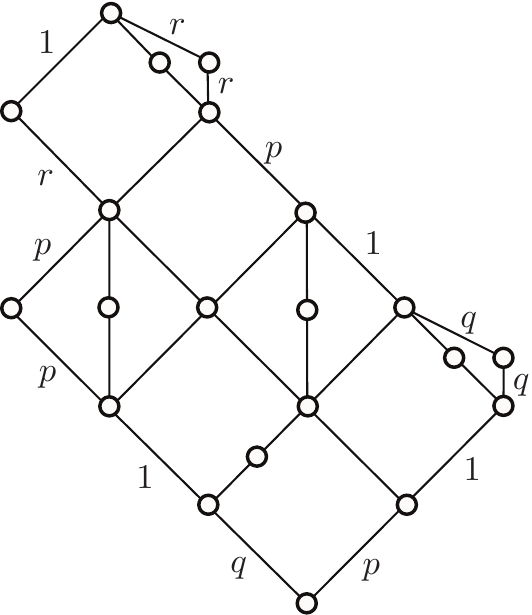}}
\caption{Representing $Q_{v}$ for $D = D_{7,5}$; the congruence
$p \jj r$ is principal, $q \jj r$ is not}\label{F:D75v}
\end{figure}

For the full proof Theorem~\ref{T:leq7}, 
see this paper in Research Gate:
\begin{quote}
\verb+http://tinyurl.com/principalcongruencesI+
\end{quote}
or in arXiv.org:
\begin{quote}
\verb+http://arxiv.org/abs/1705.05319+
\end{quote}

\subsection{A distributive lattice of size $8$}
\label{S:8}

Let $L$ be a finite lattice with congruence lattice 
$\Con L = \SB 3$, 
where $\SB 3$ is the eight-element Boolean lattice
with atoms $\bga_1, \bga_2, \bga_3$. 

By \eqref{E:repr},
\[
   \Jp {\SB 3} 
      = \set{\bga_1, \bga_2, \bga_3, \zero, \one} \ci \Princ L.
\]
\begin{lemma}\label{L:example}
There is no finite lattice $L$ with $\Con L = \SB 3$ and $\Jp {\SB 3} = \Princ L$.
\end{lemma}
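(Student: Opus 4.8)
The plan is to argue by contradiction: suppose such a finite lattice $L$ exists with $\Con L = \SB 3$ and $\Princ L = \Jp{\SB 3} = \set{\bga_1,\bga_2,\bga_3,\zero,\one}$. The key point is that the join $\bga_1 \jj \bga_2$ is then a congruence that is \emph{not} principal (it is join-reducible, hence not in $\Jp{\SB 3}$, hence not in $\Princ L$), and similarly for $\bga_1 \jj \bga_3$ and $\bga_2 \jj \bga_3$. So I would look for a structural reason why at least one of these three pairwise joins is forced to be principal in any finite lattice whose congruence lattice is Boolean with three atoms.

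First I would recall the basic correspondence: each $\bga_i$, being join-irreducible in $\Con L$, is of the form $\con{c_i,d_i}$ for some prime interval $[c_i,d_i]$ of $L$, i.e. $\bga_i = \con{p_i}$ for a prime quotient $p_i$. The join $\bga_i \jj \bga_j$ is generated by the pair of prime quotients $\set{p_i, p_j}$; the question is whether one can always find a \emph{single} pair $a < b$ in $L$ with $\con{a,b} = \bga_i \jj \bga_j$. I would then use the fact that $L$ is finite: take a prime quotient inside the interval $[a,b]$ that is not collapsed by $\bga_i \mm \bga_j = \zero$... actually the cleaner route is to analyze the quotient $[0_L, 1_L]$ itself. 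Since $\one = \con{0_L, 1_L}$ and $\one = \bga_1 \jj \bga_2 \jj \bga_3$, and $\one$ \emph{is} principal, I would try to push a maximal chain argument: a maximal chain from $0_L$ to $1_L$ consists of prime quotients, each generating some $\bga_i$, and the collection of $\bga_i$ appearing must join up to $\one$, so all three atoms appear along the chain.

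The main step, and the expected obstacle, is the following combinatorial claim about prime quotients in a finite lattice: if $p$ and $q$ are prime quotients with $\con{p} = \bga$ and $\con{q} = \bgb$ and $\bga, \bgb$ are distinct atoms of a Boolean $\Con L$, and if moreover $p$ and $q$ are \emph{consecutive} along a maximal chain (so $p = [x,y]$, $q = [y,z]$ sharing the endpoint $y$), then $\con{x,z} = \bga \jj \bgb$, so $\bga \jj \bgb$ is principal. Thus to avoid any pairwise join being principal, no two consecutive prime quotients on any maximal chain may generate distinct atoms — meaning each maximal chain from $0_L$ to $1_L$, read as a sequence of atoms-of-$\Con L$, can never switch from one atom to another without repetition being forced in a way that... here I would need to combine this with the perspectivity/weak-projectivity structure to derive that the three atoms cannot all be ``reached'' without a forbidden adjacency. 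The delicate part is handling chains where the label repeats (stays at $\bga_i$ for several steps) before switching; I expect to need that a prime quotient $[x,y]$ with $\con{x,y} = \bga_i$ lying ``between'' quotients labelled $\bga_j$ and $\bga_k$ still forces $\bga_i \jj \bga_j$ or the full join to collapse a shorter interval. Pinning down exactly this local-to-global argument — ruling out every labelling pattern of a maximal chain of $L$ by $\set{\bga_1,\bga_2,\bga_3}$ that is consistent with all three pairwise joins being non-principal — is where the real work lies, and it is plausible the cleanest writeup instead exhibits a specific small-interval obstruction (a sublattice forced inside $L$) rather than a full chain-labelling analysis.
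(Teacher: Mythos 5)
Your own two ingredients already close the proof; the ``local-to-global'' analysis you flag as the real work, and the appeal to perspectivity/weak-projectivity, are not needed. Take any maximal chain $0 = y_0 \prec y_1 \prec \dots \prec y_m = 1$ in $L$. Each $\con{y_i, y_{i+1}}$ is generated by a prime interval, hence is join-irreducible in $\Con L = \SB 3$, hence is one of the atoms $\bga_1, \bga_2, \bga_3$. Now there are only two cases. If two \emph{consecutive} labels ever differ, say $\con{y_{i-1}, y_i} = \bga_j$ and $\con{y_i, y_{i+1}} = \bga_k$ with $j \neq k$, then $\con{y_{i-1}, y_{i+1}} = \bga_j \jj \bga_k$ is a principal, join-reducible congruence, contradicting $\Princ L = \Jp{\SB 3}$ --- this is exactly your key claim. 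Otherwise all labels coincide, and then $\one = \con{0,1} = \con{y_0,y_1} \jj \dots \jj \con{y_{m-1},y_m}$ would lie below a single atom, which is absurd. Repeated labels cause no difficulty at all: any labelling that uses at least two distinct atoms must switch at some adjacent pair, and the constant labelling is impossible. So there is no pattern left to rule out, and your worry about chains that ``stay at $\bga_i$ for several steps before switching'' is unfounded.

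Modulo this finishing step, your argument is essentially the paper's. The paper starts from $\cng 0 = 1\ (\bga_1 \jj \bga_2 \jj \bga_3)$ and takes a (not necessarily maximal) chain $0 = x_0 < x_1 < \dots < x_n = 1$ whose steps each lie under a single atom, arranged so that consecutive labels differ; since each step is nontrivial and lies under an atom, it generates exactly that atom, and then $\con{x_0, x_2} = \bga_{j_0} \jj \bga_{j_1}$ is principal, the same contradiction. The only difference is cosmetic: you use covering chains and the fact that a prime interval generates a join-irreducible congruence, while the paper uses the standard chain description of a join of congruences together with the atom property directly.
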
          

\begin{proof}
Let us assume that $L$ is a finite lattice 
with $\Con L = \SB 3$ and $\Jp {\SB 3} = \Princ L$.
Then in $L$, $\cng 0=1 (\one)$ and so $\cng 0=1 (\bga_1 \jj \bga_2 \jj \bga_3)$.
It follows that there is a finite chain $0=x_0 < x_1 < \dots < x_n = 1$
such that for every $0 \leq i < n$, 
there is $1 \leq j_i \leq 3$ satisfying $\cng x_i=x_{i+1} (\bga_{j_i})$
and $j_0 \neq j_1 \neq \dots \neq j_{n-1}$.
Without loss of generality, let $j_0 = 1$ and $j_1 = 2$.
Observe that 
\[
   \zero < \con{x_0, x_1} \leq \bga_{1}
\]
and so $\con{x_0, x_1} = \bga_{1}$ because $\bga_{1}$ is an atom.
Since $n > 1$, it follows that 
\[\bga_{1} \jj \bga_{2} = \con{x_0, x_2} \in \Princ L,\]
contrary to the assumption that $\Jp {\SB 3} = \Princ L$.
\end{proof}

Using the notation in the proof of Lemma~\ref{L:example}, we can prove more.

Since $\cng 0=1 (\bga_1 \jj \bga_2)$ fails, 
there must be a $2 \leq k < n$ so that $\cng x_k=x_{k+1} (\bga_{3})$. 
As~before, $\con{x_k, x_{k+1}} = \bga_{3}$. 
Then $j_{k-1} = 1$ or $j_{k-1} = 2$, say, \text{$j_{k-1} = 1$.} 
It follows that $\con{x_{k-1}, x_{k}} = \bga_{1}$ and 
\[\bga_{1} \jj \bga_{3} = \con{x_{k-1}, x_{k+1}} \in \Princ L.\]

So we conclude 
\begin{lemma}\label{L:two}
Let $L$ be a finite lattice $L$ with $\Con L = \SB 3$.
Then 
\[
   |\SB 3 - \Princ L| \leq 1.
\]

\end{lemma}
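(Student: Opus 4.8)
The plan is to build directly on the argument already carried out for Lemma~\ref{L:example} and its refinement, extracting from it the combinatorial skeleton of \emph{any} finite lattice $L$ with $\Con L = \SB 3$. The key object is the covering chain $0 = x_0 < x_1 < \dots < x_n = 1$ witnessing $\cng 0 = 1(\one)$, together with the assignment $i \mapsto j_i \in \{1,2,3\}$ with consecutive values distinct and $\con{x_i,x_{i+1}} = \bga_{j_i}$ (each such principal congruence being an atom, hence equal to the atom it lies below). The point of the refined argument was that \emph{as soon as} two distinct indices $a, b$ both occur among the $j_i$, one can find a place where an $\bga_a$-step is immediately followed or preceded by an $\bga_b$-step, and then the join $\bga_a \jj \bga_b$ is realized as a principal congruence $\con{x_{i-1},x_{i+1}}$. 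So the strategy is: show that among the three possible joins of two atoms, at most one can be absent from $\Princ L$.

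First I would observe that $\Jp{\SB 3} = \{\zero,\one,\bga_1,\bga_2,\bga_3\} \ci \Princ L$ always, by \eqref{E:repr}, and that $\one \in \Princ L$ forces (as in the proof of Lemma~\ref{L:example}) a covering chain from $0$ to $1$ whose steps are colored by the three atoms with no two consecutive steps of the same color. Since $\cng 0 = 1$ cannot be deduced from any single atom nor from the join of any two atoms alone (those are proper elements of $\SB 3$, strictly below $\one$), all three colors must actually appear among $j_0,\dots,j_{n-1}$ — this is the analogue of the ``since $\cng 0 = 1(\bga_1 \jj \bga_2)$ fails'' step, applied to each pair. Next, for each unordered pair $\{a,b\}$ of colors, since both colors occur somewhere in the sequence $j_0,\dots,j_{n-1}$ and consecutive colors differ, there must be an index $i$ where the color switches between the two; more precisely one can locate an $i$ with $j_{i-1} = a$ and $j_i = b$ (or vice versa), because along the chain the color must change from something in $\{a,b\}$ to something outside it and back, or pass directly between $a$ and $b$. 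Then $\con{x_{i-1},x_i} = \bga_a$, $\con{x_i,x_{i+1}} = \bga_b$, so $\con{x_{i-1},x_{i+1}} = \bga_a \jj \bga_b \in \Princ L$. Hence every join of two atoms that the argument reaches lies in $\Princ L$.

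The delicate point — and where I expect the main obstacle — is the claim that for \emph{every} pair $\{a,b\}$ we can find adjacent steps colored $a$ and $b$. It is not immediately true that a sequence over $\{1,2,3\}$ with all three values present and no two consecutive equal must contain every pair as an adjacent pair: e.g.\ the sequence $1,2,1,2,\dots,2,3,2,3,\dots$ contains the adjacencies $\{1,2\}$ and $\{2,3\}$ but never $\{1,3\}$. This is exactly why the lemma only claims $|\SB 3 - \Princ L| \le 1$ rather than $\Princ L = \SB 3$: the obstruction is that one pair, say $\{1,3\}$, might genuinely fail to be adjacent anywhere. So the correct statement to prove is that at least \emph{two} of the three pairs must occur as adjacencies. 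I would argue this by noting that in a sequence over three symbols with no immediate repetition, if only one pair, say $\{1,2\}$, ever occurred as an adjacency, then the symbol $3$ — which must appear — could never be adjacent to either $1$ or $2$, i.e.\ it could never appear at all (any occurrence of $3$ has a neighbor, and that neighbor is one of $1,2$, producing a forbidden adjacency $\{1,3\}$ or $\{2,3\}$); contradiction. Hence at least two distinct pairs, say $\{1,2\}$ and $\{1,3\}$ (or $\{1,2\}$ and $\{2,3\}$), occur as adjacencies, and by the previous paragraph the corresponding two joins lie in $\Princ L$. Together with $\Jp{\SB 3} \ci \Princ L$, this shows $\Princ L$ omits at most the single remaining join-reducible element, i.e.\ $|\SB 3 - \Princ L| \le 1$, which is the assertion. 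Finally I would remark that this bound is attained: the figures in the full version exhibit lattices realizing $\Princ L$ missing exactly one of the three joins, so ``$\le 1$'' cannot be improved to ``$= 0$'', consistent with Lemma~\ref{L:example}.
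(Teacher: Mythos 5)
Your proof is correct and takes essentially the same route as the paper: both arguments use the chain witnessing $\one = \con{0,1} \leq \bga_1 \jj \bga_2 \jj \bga_3$, identify each step's principal congruence with an atom, and extract two \emph{distinct} adjacent color pairs (the paper via the first adjacency plus the step preceding an occurrence of the third color) to conclude that at least two of the three dual atoms of $\SB 3$ are principal, hence $|\SB 3 - \Princ L| \leq 1$. Note that your second-paragraph claim that \emph{every} pair of colors occurs as an adjacency is false, but you correctly retract and repair it in the final paragraph, so the argument as a whole stands.
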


The converse is also true.

\begin{theorem}\label{T:B3}
Let $L$ be a finite lattice with $\Con L = \SB 3$.
Let $\Jp {\SB 3} \ci Q \ci \SB 3$.
Then $Q$ is representable if{}f $|\SB 3 - Q| \leq 1$.
\end{theorem}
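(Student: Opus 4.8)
The plan is to prove Theorem~\ref{T:B3} by combining Lemma~\ref{L:two} with an explicit construction for the nontrivial direction. Lemma~\ref{L:two} already gives one implication: if $L$ is a finite lattice with $\Con L = \SB 3$, then $|\SB 3 - \Princ L| \le 1$, so whenever $Q$ is representable (hence $Q = \Princ L$ for some such $L$) we must have $|\SB 3 - Q| \le 1$. So the work is entirely in the converse: given $Q$ with $\Jp{\SB 3} \ci Q \ci \SB 3$ and $|\SB 3 - Q| \le 1$, build a finite lattice $L$ with $\Con L \iso \SB 3$ and $\Princ L$ corresponding to $Q$.

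Split the converse into the two cases permitted by the cardinality bound. If $|\SB 3 - Q| = 0$, then $Q = \SB 3 = D$, and Theorem~\ref{T:D} already hands us a finite (in fact sectionally complemented) lattice with $\Con L \iso \SB 3$ and $\Princ L = \Con L$; nothing more is needed. The real content is the case $|\SB 3 - Q| = 1$, i.e.\ $Q = \SB 3 - \set{d}$ for a single element $d$. Since $Q \supseteq \Jp{\SB 3} = \set{\bga_1,\bga_2,\bga_3,\zero,\one}$, the omitted element $d$ must be join-reducible and not equal to $\zero$ or $\one$; in $\SB 3$ the join-reducible elements of $\SB 3^-$ are exactly the three coatoms $\bga_i \jj \bga_j$. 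By the symmetry of $\SB 3$ in its three atoms we may assume $d = \bga_1 \jj \bga_2$. So it suffices to construct one finite lattice $L$ with $\Con L \iso \SB 3$ in which $\Princ L = \SB 3 - \set{\bga_1 \jj \bga_2}$, that is: the three atoms are principal, the coatoms $\bga_1 \jj \bga_3$ and $\bga_2 \jj \bga_3$ are principal, $\one$ is principal, but $\bga_1 \jj \bga_2$ is \emph{not} principal.

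The construction I would use follows the pattern of the examples for $D_{7,5}$ in Figures~\ref{F:D751} and~\ref{F:D75v} and the gluing remarks after Theorem~\ref{T:new2}: start from a lattice realizing the sublattice structure so that the ``forbidden'' coatom $\bga_1 \jj \bga_2$ arises only as a non-principal join — arrange a chain $0 = x_0 < x_1 < x_2 < x_3 < x_4 = 1$ with $\con{x_0,x_1} = \bga_1$, $\con{x_1,x_2} = \bga_3$, $\con{x_2,x_3} = \bga_2$, so that $\con{0,1} = \one$ is principal and each $\bga_i$ is principal (each is $\con{x_j,x_{j+1}}$ for a suitable prime-generating interval), while $\bga_1 \jj \bga_2$ is not of the form $\con{u,v}$ because every interval that collapses under $\bga_1 \jj \bga_2$ also meets $\bga_3$. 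Concretely one builds $L$ as a suitable lattice whose congruence lattice is forced to be $\SB 3$ (e.g.\ three independent ``coloured'' prime intervals amalgamated appropriately, then checking via the prime-interval/colouring calculus that $\Con L \iso \SB 3$) and then verifies $\Princ L$ by inspecting which congruences are of the form $\con{a,b}$. The key technical steps are: (1) verify $\Con L \iso \SB 3$ — i.e.\ the three colours are independent and generate a Boolean congruence lattice; (2) verify each of $\bga_1,\bga_2,\bga_3$, both surviving coatoms, and $\one$ is principal by exhibiting a generating pair; and (3) verify $\bga_1 \jj \bga_2$ is \emph{not} principal, which is the crux.

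The main obstacle is step (3): showing a specific join of atoms fails to be principal. This requires a global argument about $L$, not a local one — one must show that for \emph{every} pair $u < v$ in $L$, $\con{u,v} \ne \bga_1 \jj \bga_2$. The natural route mirrors the proof of Lemma~\ref{L:example}: if $\con{u,v} = \bga_1 \jj \bga_2$ then collapsing $[u,v]$ forces a covering chain inside $[u,v]$ whose prime quotients are coloured by $1$'s and $2$'s only, with no $3$; so the construction must be rigged so that no such interval exists — every prime interval coloured $1$ is ``blocked'' from being joined to a $2$-coloured one without passing through a $3$. Getting the frame/ $S(p,q)$-type construction to enforce exactly this separation, while still keeping $\bga_1 \jj \bga_3$ and $\bga_2 \jj \bga_3$ principal, is the delicate balancing act; once the diagram is fixed, the verification is a finite (if tedious) check of the congruence-generation rules.
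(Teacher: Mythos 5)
Your overall architecture matches the paper: forward direction from Lemma~\ref{L:two}, converse split into the cases $|\SB 3 - Q| = 0$ (handled by Theorem~\ref{T:D}, essentially equivalent to the paper's choice $L = \SB 3$) and $|\SB 3 - Q| = 1$ with $Q = \SB 3 - \set{\bga_1 \jj \bga_2}$ after using symmetry. But in the crucial second case you never actually produce the lattice: you describe the properties it should have, propose adapting the frame/$S(p,q)$ machinery or an amalgam of ``coloured'' prime intervals, and then explicitly defer both the construction and the verification of steps (1)--(3), calling the non-principality of $\bga_1 \jj \bga_2$ ``the crux'' and ``the delicate balancing act.'' As it stands this is a plan, not a proof; the existence statement that carries the whole converse is left open.

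The missing observation is that no machinery is needed: the paper represents $Q = \SB 3 - \set{\bga_1 \jj \bga_2}$ by the four-element chain $\SC 4$. For a finite chain, $\Con$ is the Boolean lattice on its prime intervals, so $\Con \SC 4 \iso \SB 3$; and $\con{u,v}$ collapses exactly the prime intervals lying in $[u,v]$, so a congruence of $\SC 4$ is principal if{}f it is a join of \emph{consecutive} atoms. Labelling the three prime intervals of $\SC 4$ by $\bga_1, \bga_3, \bga_2$ from bottom to top, the principal congruences are $\zero, \bga_1, \bga_2, \bga_3, \bga_1 \jj \bga_3, \bga_2 \jj \bga_3, \one$, and $\bga_1 \jj \bga_2$ is omitted automatically --- no global ``blocking'' argument in the style of Lemma~\ref{L:example} is required, because in a chain the principal congruences are read off directly. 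Note that your own sketch (a chain coloured $1, 3, 2$) is exactly this idea; you only needed to recognize that the chain by itself already has congruence lattice $\SB 3$, rather than embedding it in a larger, still-to-be-designed lattice. (Also, your chain $x_0 < x_1 < x_2 < x_3 < x_4$ has four prime intervals but you colour only three; the correct ambient object is the four-element chain with three prime intervals.)
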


\begin{proof}
If $Q$ is representable, then $|\SB 3 - Q| \leq 1$ by Lemma~\ref{L:two}.

Conversely, let $|\SB 3 - Q| \leq 1$. 
Then either $\SB 3 = Q$ or 
$\SB 3 = Q \uu \set{x}$, where x is a dual atom of $\SB 3$.
In the first case, we represent $Q$ by $L = \SB 3$. 
In the second case, we represent $Q$ by the chain $\SC 4$.
\end{proof}

This example is easy to modify to yield the following result.

\begin{corollary}\label{L:down}
Let $Q$ be representable. Then $Q^- = Q - \set{1}$ is not necessarily a down set.
\end{corollary}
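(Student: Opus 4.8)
The statement is an existence claim, so the plan is simply to exhibit one representable set $Q \ci D$ for which $Q^- = Q - \set{\one}$ fails to be a down set of $D$. The lattice $\SC 4$ from the proof of Theorem~\ref{T:B3} is \emph{not} by itself good enough: there $\Princ \SC 4 = \SB 3 - \set{\bga_1 \jj \bga_3}$, the omitted element is a dual atom of $\SB 3$, and deleting it together with $\one$ still leaves a down set. The obstruction there is exactly that the omitted congruence is maximal below the unit. The idea, then, is to run the same construction one level higher, using a longer chain, so that the omitted (non-principal) congruence has a \emph{principal} congruence strictly above it which is still not the unit.

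Concretely, I would take $L = \SC 5$, the five-element chain $0 < x_1 < x_2 < x_3 < 1$, and set $D = \Con L \iso \SB 4$, with atoms $\bga_i = \con{x_{i-1}, x_i}$ for $i = 1, 2, 3, 4$ (where $x_0 = 0$ and $x_4 = 1$). For a finite chain a congruence is principal if{}f it has at most one nontrivial block, so $Q := \Princ L$ consists of $\zero$ together with the joins of the ``contiguous runs'' $\bga_j \jj \bga_{j+1} \jj \dots \jj \bga_k$ of atoms. Then $Q$ is representable (by $L$ itself), and it is a candidate, since $\Jp D = \set{\zero, \one, \bga_1, \bga_2, \bga_3, \bga_4} \ci Q$.

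Finally I would check that $Q^-$ is not a down set of $D$. The congruence $\con{0, x_3} = \bga_1 \jj \bga_2 \jj \bga_3$ is principal and different from $\one = \con{0, 1}$, so it lies in $Q^-$; but $\bga_1 \jj \bga_3 \leq \bga_1 \jj \bga_2 \jj \bga_3$, and $\bga_1 \jj \bga_3 = \con{0, x_1} \jj \con{x_2, x_3}$ has two nontrivial blocks, hence is not principal and so not in $Q^-$. This contradicts the down-set property, which is the claim. I do not expect any real obstacle: the only ingredients are the standard descriptions of $\Con$ and $\Princ$ of a finite chain, and the only thing to get right is the choice of chain, $\SC 5$ rather than $\SC 4$. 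If one prefers the smallest possible $D$, one can instead feed $\SC 4$ (which represents $\SB 3 - \set{\bga_1 \jj \bga_3}$ in $\SB 3$) into the construction of item~(i) at the end of Section~\ref{S:Theorem2}, obtaining a finite lattice that represents $(\SB 3 + \SC 1) - \set{\bga_1 \jj \bga_3}$ in $\SB 3 + \SC 1$; there the omitted element lies strictly below the old unit $1_{\SB 3}$, which is now a dual atom of $\SB 3 + \SC 1$ belonging to $\Princ$.
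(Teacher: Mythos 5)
Your proposal is correct, and it takes essentially the approach the paper intends: the paper gives no explicit proof, only the remark that the $\Con L = \SB 3$, $\SC 4$ example is ``easy to modify,'' and your construction with $L = \SC 5$, $D = \SB 4$, $Q = \Princ \SC 5$ (or the alternative via construction (i) applied to $\SC 4$) is precisely such a modification. You also correctly note that $\SC 4$ itself does not suffice because the omitted congruence $\bga_1 \jj \bga_3$ is a dual atom, so passing to the longer chain, where $\bga_1 \jj \bga_2 \jj \bga_3 \in Q^-$ lies above the non-member $\bga_1 \jj \bga_3$, is exactly the needed adjustment.
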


Note the odd role planarity is playing in this paper.
All distributive lattices proven to be fully representable
are planar; the lattices we construct in this section are planar. The smallest distributive lattice that is not fully representable
is not planar.

\section*{Addendum, June 20, 2017}\label{S:Addendum}

After this paper was accepted for publication, 
we proved two theorems concerning the 
``odd role planarity is playing''
mentioned in the last paragraph of the previous section.
These results were independently obtained
by G\'abor Cz\'edli, and are presented here with his permission.

\begin{theorem}\label{T:planar}
Let $D$ be a finite distributive lattice.
If $D$ is fully representable, then it is planar.
\end{theorem}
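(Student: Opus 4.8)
The plan is to prove the contrapositive: if $D$ is not planar, then some candidate $Q$ fails to be representable, so $D$ is not fully representable. By a well‑known characterization of planar finite distributive lattices (see \cite{CFL2}), the non‑planarity of $D$ is equivalent to the ordered set $P := \J D$ having a three‑element antichain. Throughout I identify $D$ with the lattice of all down‑sets of $P$, so that for $p, q \in P$ the element $p \jj q$ of $D$ is the down‑set $\mathord{\downarrow}p \uu \mathord{\downarrow}q$.

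The first, preparatory, step is to choose the antichain well. Among all three‑element antichains $\set{a, b, c}$ of $P$, I take one for which $|\mathord{\downarrow}a \uu \mathord{\downarrow}b \uu \mathord{\downarrow}c|$ is as small as possible. Such an antichain is \emph{tight}: if $x < a$ in $P$, then $x \leq b$ or $x \leq c$ (and symmetrically for $b$, $c$)---for otherwise $\set{x, b, c}$ would again be a three‑element antichain, with $\mathord{\downarrow}x \uu \mathord{\downarrow}b \uu \mathord{\downarrow}c$ a proper subset of $\mathord{\downarrow}a \uu \mathord{\downarrow}b \uu \mathord{\downarrow}c$ (it omits $a$), contradicting minimality. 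For a tight antichain, $a \jj b$, $a \jj c$, $b \jj c$ are pairwise distinct, join‑reducible, and different from $\zero$ and $\one$, so
\[
   Q := D - \set{a \jj b,\ a \jj c,\ b \jj c}
\]
is a proper candidate, $\Jp{D} \ci Q \sci D$. I claim $Q$ is not representable.

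Suppose, then, that a finite lattice $L$ represents $Q$, and identify $\Con L$ with $D$. Let $\gamma \in \Con L$ correspond to the down‑set $I := (\mathord{\downarrow}a \uu \mathord{\downarrow}b \uu \mathord{\downarrow}c) - \set{a, b, c}$ of $P$, and pass to $L' := L/\gamma$, whose congruence lattice is (isomorphic to) the lattice of down‑sets of $P - I$. By tightness $\mathord{\downarrow}a - \set{a} \ci I$, and likewise for $b$ and $c$, so $a$, $b$, $c$ are minimal in $P - I$, hence are atoms of $\Con L'$ forming a three‑element antichain. Since $\chi := a \jj b \jj c$ is none of $a \jj b$, $a \jj c$, $b \jj c$, it lies in $Q = \Princ L$; thus $\chi$ is a principal congruence of $L$, and so its image in $L'$ is a principal congruence of $L'$ equal to the join $a \jj b \jj c$ of the three atoms of $\Con L'$. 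Running the argument from the proof of Lemma~\ref{L:example}, now inside $L'$ on this principal congruence, produces a principal congruence of $L'$ equal, in $\Con L'$, to the join of two of the atoms $a$, $b$, $c$; say it equals $a \jj b$. (It is here that atomicity is essential: it is what makes the dichotomy ``$\leq$ an atom, hence $=$ that atom'' available, exactly as in Lemma~\ref{L:example}.)

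It remains to descend to $L$. The element ``$a \jj b$ of $\Con L'$'' corresponds, inside the congruence lattice of $L$, to the congruence given by the down‑set $I \uu \set{a, b}$ of $P$; by tightness this down‑set is $\mathord{\downarrow}a \uu \mathord{\downarrow}b$, i.e.\ the element $a \jj b$ of $D = \Con L$. Hence there are $p, q \in L$ with $\con{p, q} \jj \gamma = a \jj b$ in $\Con L$; this forces $\con{p, q} \leq a \jj b$, and, since $\gamma$ is the down‑set $(\mathord{\downarrow}a \uu \mathord{\downarrow}b) - \set{a, b}$, it also forces $a, b$ into the down‑set of $\con{p, q}$, so $a \jj b \leq \con{p, q}$. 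Therefore $a \jj b = \con{p, q} \in \Princ L = Q$, contradicting $a \jj b \notin Q$. The main obstacle---which the minimal‑union antichain is designed to overcome---is exactly this tension: the quotient by $\gamma$ is needed so that $a$, $b$, $c$ become atoms and the Lemma~\ref{L:example} argument applies, whereas tightness is needed so that the congruence forced to be principal, once pulled back to $L$, is \emph{precisely} one of the forbidden joins $a \jj b$, $a \jj c$, $b \jj c$ rather than some larger congruence that could harmlessly sit in $Q$.
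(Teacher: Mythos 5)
Your proof is correct, but it takes a genuinely different route from the paper's. Both arguments start from a three-element antichain in $\J D$ (the same planarity criterion), yet the paper chooses the candidate $Q = \J D \uu \set{\bga, \zero, \one}$, where $\bga = \bga_0 \jj \bga_1 \jj \bga_2$, so that \emph{every} join-reducible congruence other than $\bga$ and $\one$ is forbidden; it then works directly in $L$: writing $\bga = \con{a,b}$ and taking a maximal chain of $[a,b]$, each prime-interval congruence lies below some $\bga_j$ and each $\bga_j$ is attained, and a maximality argument on subintervals of the chain produces a principal congruence of the form $\bga_0 \jj \con{v,v'}$ with $\con{v,v'} \leq \bga_1$ incomparable to $\bga_0$, which is join-reducible and strictly between $\bga_0$ and $\bga$, hence not in $Q$ --- no quotient, no correspondence theorem, and no special choice of the antichain are needed. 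You instead forbid only the three pairwise joins of a minimally chosen (tight) antichain, pass to $L/\gamma$ so that $a, b, c$ become atoms and the $\SB 3$-style argument of Lemma~\ref{L:example} applies, and pull the resulting principal congruence back through the correspondence theorem, where tightness guarantees it is exactly the forbidden element $a \jj b$ (your steps --- tightness, the identification of $\Con(L/\gamma)$ with the down-sets of $P - I$, the equality $I \uu \set{a,b} = \mathord{\downarrow}a \uu \mathord{\downarrow}b$, and the forcing $\con{p,q} = a \jj b$ --- all check out; only the phrase ``the three atoms of $\Con L'$'' is a harmless overstatement, since $P - I$ may have further minimal elements, but the argument never uses that there are only three). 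The trade-off: your version costs the tightness lemma and the quotient machinery but yields a sharper-looking counterexample candidate (deleting just the three pairwise joins already destroys representability) and reuses Lemma~\ref{L:example} explicitly, whereas the paper's choice of a large $Q$ makes the exclusion step immediate and keeps the whole proof inside $L$.
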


\begin{proof}
Assume that $D$ is fully representable but not planar. 
Then $\J D$ contains a three-element antichain 
$\bga_0, \bga_1, \bga_2$. 
Let $\bga = \bga_0 \jj \bga_1 \jj \bga_2$.
Using that our antichain is in $\J D$, we obtain that 
$\bga \neq \bga_i \jj \bga_j$ for any $i, j \in\set{0,1,2}$. 
Since $D$ is fully representable, 
there is a finite lattice $L$ representing 
$Q = \J D \uu \set{\bga, \zero, \one}$.
In~particular, $\bga$ is principal in $L$, that is,
$\bga = \con{a,b}$. 
Let $X$ be a maximal chain 
$a = x_0 \prec \dots \prec x_n = b$ in $[a, b]$.

For each $i$ with $0 \leq i < n$, 
the interval $[x_i,x_{i+1}]$ is a prime interval in $L$;
thus $\con{x_i, x_{i+1}} \in \J D$.
Since $\con{x_i, x_{i+1}} \leq \bga$, there is then a
$j \in \set{0,1,2}$ such that
$\con{x_i,x_{i+1}} \leq \bga_j$.

Utilizing that $\JJm{\con{x_i,x_{i+1}}}{0 \leq i < n} = \bga$,  
for each $j \in \set{0,1,2}$, 
there exists an $i_j$ with $0 \leq i_j < n$ and 
$\bga_j \leq \con{x_{i_j},x_{i_j+1}} $.
Since $\set{\bga_0, \bga_1, \bga_2}$ 
is an antichain, we conclude that
$\con{x_{i_j},x_{i_j+1}} = \bga_j $.
We take the largest interval 
$[u, v] \ce [x_{i_0},x_{i_0+1}]$ of $X$ 
with $\con{u,v} = \bga_0$ and let $[u', v']$ 
be an interval of $X$ that contains $[u, v]$
and one more element of~$X$. 
Without loss of generality, 
we can assume that $u = x_k$, $v = x_l$, $k < l$,
$u' = u$, $v' = x_{l+1}$.
By the maximality of $[u, v]$, it follows that 
$\con{v,v'} \nleq \bga_0$.
But then $\con{v,v'} \leq \bga_1$ 
or $\con{v,v'} \leq \bga_2$,
say $\con{v,v'} \leq \bga_1$.
Thus the congruences $\con{u,v}=\bga_0$ 
and $\con{v,v'}$ are incomparable.
Consequently, $\con{u,v'}$ is join-reducible.
This is a contradiction,
since $\con{u, v'}$ is principal but 
$\con{u, v} < \con{u, v'} \leq \bga_0 \jj \bga_1 < \bga$,
and so $\con{u, v'}$ is not in $Q$.
\end{proof}

\begin{theorem}\label{T:9}
The planar distributive lattice $D = \SC 3^2$
is not fully representable.
\end{theorem}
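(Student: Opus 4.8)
The plan is to exhibit a single proper candidate in $D = \SC 3^2$ that fails to be representable, namely the smallest one, $Q = \Jp D$. Writing the elements of $\SC 3$ as $\set{0 < 1 < 2}$ and the elements of $\SC 3^2$ as pairs $(i,j)$, we have $\J D = \set{(1,0),(2,0),(0,1),(0,2)}$ (two two-element chains, $(1,0) < (2,0)$ and $(0,1) < (0,2)$), so $\Jp D$ has six elements while $D$ has nine; thus $\Jp D$ is a proper candidate. Note also that the unit $(2,2)$ of $D$ is join-reducible --- so Theorem~\ref{T:unit} is unavailable --- and that the join-reducible elements of $D$ are exactly $\set{(1,1),(2,1),(1,2),(2,2)}$, a principal up-set; the argument will exploit this last fact.

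Next I would argue by contradiction. Suppose a finite lattice $L$ represents $Q = \Jp D$, and identify $\Con L$ with $\SC 3^2$ so that $\Princ L = \Jp D$. Since $\con{0,1} = \one = (2,2)$ is join-reducible, it is not generated by a prime interval, hence $0 \prec 1$ fails in $L$; pick a maximal chain $0 = x_0 \prec x_1 \prec \dots \prec x_n = 1$ of $L$, so $n \geq 2$. For each $i$ the interval $[x_i, x_{i+1}]$ is prime, so $\con{x_i,x_{i+1}}$ is a join-irreducible congruence, i.e.\ one of $(1,0),(2,0),(0,1),(0,2)$, and $\JJm{\con{x_i,x_{i+1}}}{0 \leq i < n} = \con{x_0, x_n} = \con{0,1} = (2,2)$.

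The key step is a colouring argument. Call the prime interval $[x_i, x_{i+1}]$ \emph{$\alpha$-coloured} if $\con{x_i, x_{i+1}} \leq (2,0)$ and \emph{$\beta$-coloured} if $\con{x_i, x_{i+1}} \leq (0,2)$; since the four join-irreducibles split as $\set{(1,0),(2,0)} \uu \set{(0,1),(0,2)}$ and $(2,0) \mm (0,2) = 0$, this is an exhaustive and mutually exclusive classification of the prime intervals of the chain. If every one of them were $\alpha$-coloured, then $\JJm{\con{x_i,x_{i+1}}}{0 \leq i < n} \leq (2,0) \neq (2,2)$, a contradiction; dually for $\beta$. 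So both colours occur, and, moving along the chain, there is an index $i$ with $[x_i, x_{i+1}]$ and $[x_{i+1}, x_{i+2}]$ of different colours. For that $i$,
\[
   \con{x_i, x_{i+2}} = \con{x_i, x_{i+1}} \jj \con{x_{i+1}, x_{i+2}} \geq (1,0) \jj (0,1) = (1,1),
\]
so $\con{x_i, x_{i+2}}$ is join-reducible and hence does not lie in $\Jp D = \Princ L$ --- contradicting the fact that $\con{x_i, x_{i+2}}$ is a principal congruence of $L$. This contradiction shows that $Q = \Jp D$ is not representable, so $D = \SC 3^2$ is not fully representable.

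I do not anticipate a real obstacle: this is a direct strengthening of the argument of Lemma~\ref{L:example}, the new ingredient being simply that in $\SC 3^2$ the join of an $\alpha$-coloured with a $\beta$-coloured prime-interval congruence is forced into the join-reducible up-set $\set{(1,1),(2,1),(1,2),(2,2)}$. The only facts used beyond bookkeeping are the standard ones that a covering pair of a finite lattice generates a join-irreducible congruence and that $\con{a,c} = \con{a,b} \jj \con{b,c}$ whenever $a \leq b \leq c$; the one mildly delicate point is verifying that ``both colours occur along the chain'' forces two \emph{adjacent} prime intervals of opposite colour, which is immediate since a colour change must occur somewhere along $x_0 \prec \dots \prec x_n$.
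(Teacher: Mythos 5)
Your argument breaks down at its very last step, and in fact the statement you set out to prove there is false. From $\con{x_i,x_{i+2}} \geq (1,0)\jj(0,1)=(1,1)$ you conclude that $\con{x_i,x_{i+2}}$ is join-reducible ``and hence does not lie in $\Jp D=\Princ L$''. But $\Jp D$ contains the unit $\one=(2,2)$ by definition, and $\one$ is join-reducible in $\SC 3^2$; so your colouring argument only excludes the values $(1,1),(2,1),(1,2)$ and says nothing when the two adjacent colours are $(2,0)$ and $(0,2)$, whose join is $\one\in Q$. This loophole cannot be patched, because the candidate $Q=\Jp{\SC 3^2}$ is in fact representable. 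Indeed, let $A$ be $\SM 3$ with one atom doubled, that is, with elements $0\prec a_1\prec a_2\prec 1$ and two further atoms $b,c$; then $\Con A \iso \SC 3$, every congruence of $A$ is principal ($\con{a_1,a_2}$ and $\con{0,1}$), and $\con{x,1_A}=\one_A$ for every $x<1_A$. Let $B$ be the dual lattice, so that $\con{0_B,y}=\one_B$ for every $y>0_B$, and let $L = A \gsum B$ be the glued (ordinal) sum identifying $1_A$ with $0_B$. Then $\Con L \iso \Con A\times\Con B\iso\SC 3^2$; a pair lying in $A$ or in $B$ generates a congruence of the form $(\theta,\zero)$ or $(\zero,\theta)$, while any pair $x<1_A$, $y>0_B$ generates $(\con{x,1_A},\con{0_B,y})=\one$. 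Hence $\Princ L$ is exactly $\Jp{\SC 3^2}$: along a maximal chain of $L$ the only colour change is $(2,0)$ followed by $(0,2)$, exactly the case your argument cannot exclude.

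Consequently you must work with a different candidate, and this is what the paper does: it takes $Q=\Jp D\uu\set{e}$ with $e=a\jj c$ the join of the two atoms. Since $e$ is then principal, one may choose an interval $[x,y]$ of $L$ \emph{maximal} with $\con{x,y}=e$; as $e<\one$, the interval can be enlarged by a covering step, say $y\prec z$, and maximality forces $\con{y,z}\nleq e$, so $\con{y,z}=b$ or $d$. Then $\con{x,z}=b\jj e=(2,1)$ or $d\jj e=(1,2)$ is principal but outside $Q$, and -- crucially -- these joins are strictly below $\one$, so the escape through the unit that defeats your chain argument is blocked. Your adjacent-prime-interval idea is essentially Lemma~\ref{L:example}; the extra ingredient needed for $\SC 3^2$ is the maximal-interval trick applied to a candidate that contains the join-reducible element $e$, not the minimal candidate $\Jp D$.
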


\begin{proof}
Let $D = \SC 3^2$. Let $a < b$ and $c < d$ 
be the join irreducible elements of $D$, 
and let $Q = \set{0,a,b,c,d,e,1}$, where $e = a \jj c$.
We prove that $D$ is not fully representable 
because $Q \ci D$ is not representable.

Let us assume that a finite lattice $L$ represents $Q \ci D$.
Let $[x, y]$ be a maximal interval of $L$ 
with respect to the property $\con{x, y} = e$. 
Since $e < 1$, it follows that $[x, y] \neq L$. 
Without loss of generality, we can assume that $y$ 
is not the unit element of $L$. 
So there is an element $z \in L$ with $y \prec z$.
We cannot have $\con{y, z} \leq e$, 
because then $\con{x, z} = e$, 
in contradiction with the maximality of $[x, y]$.
So $\con{y, z} = b$ or $d$, say, $b$.
Then $\con{x, z} = b \jj e$, so $b \jj e$ is principal,
contradicting that $b \jj e \nin Q$.
\end{proof}

In the new manuscript \cite{gCc},
G\'abor Cz\'edli provides a characterization of
fully principal congruence representable distributive lattices
as planar distributive lattices in which only one dual atom
can be join-reducible.

\end{document}